\newtheorem{theorem}{Theorem}[section]
\newtheorem{proposition}[theorem]{Proposition}
\newtheorem{lemma}[theorem]{Lemma}
\newtheorem{corollary}[theorem]{Corollary}
\theoremstyle{definition}
\newtheorem{definition}[theorem]{Definition}
\newtheorem{definitions}[theorem]{Definitions}
\newtheorem{example}[theorem]{Example}
\newtheorem{examples}[theorem]{Examples}
\theoremstyle{remark}
\newtheorem{remarks}[theorem]{\bf Remarks}
 \numberwithin{equation}{section}
\newcommand{\si}{\sigma}
\newcommand{\Ore}{A[t;\sigma,\delta]}
\newcommand{\de}{\delta}
\theoremstyle{definition}
\theoremstyle{remark}
\numberwithin{equation}{section}
\begin{document}
\title{($\sigma,\delta$)-codes}
\date{}
\author{{\bf \normalsize M'Hammed Boulagouaz and Andr\'{e} Leroy}\footnote{}
\vspace{6pt}\\
\normalsize University of King Khalid,\\
Abha, Saudi Arabia\\
    \normalsize E-mail: boulag@yahoo.com \vspace{6pt}\\
\normalsize  Universit\'{e} d'Artois,  Facult\'{e} Jean Perrin\\
\normalsize Rue Jean Souvraz  62 307 Lens, France\\
 \normalsize  E-mail: leroy@euler.univ-artois.fr }
\maketitle\markboth{\rm  $(\sigma,\delta)$-codes by M. Boulagouaz
and A. Leroy}{ \rm $(\sigma,\delta)$-codes by M. Boulagouaz and A.
Leroy}
\maketitle\markboth{\rm  M. Boulagouaz and A. Leroy}{ \rm ($\sigma,\delta$)-codes}

\textbf{Abstract}:  In this paper we introduce the notion of
cyclic ($f(t),\sigma,\delta$)-codes for $f(t)\in \Ore$.  These codes
generalize the $\theta$-codes as
introduced by D. Boucher, F. Ulmer, W. Geiselmann \cite{BGU}.   We construct 
generic and control matrices for these codes.  As a particular case 
the ($\si,\de$)-$W$-code associated to a Wedderburn 
polynomial are defined and we show that their control matrices are given by 
generalized Vandermonde matrices.  All the Wedderburn polynomials of 
$\mathbb F_q[t;\theta]$ are described and their control matrices are presented.  
A key role will 
be played by the pseudo-linear transformations.

\section{Introduction and preliminaries}
\vspace{4mm}

The use of rings in coding theory started when it appears that working over rings 
allowed certain codes to be looked upon as linear codes.  The use of {\it 
noncommuative} rings emerged recently in coding theory due to the pertinence of 
Frobenius rings for generalizing
Mac Williams theorems (cf. \cite{W}, for details) and also because
of the use of Ore polynomial rings as source of generalizations of cyclic codes 
(cf. e.g. \cite{BGU},\cite{BSU},\cite{SY}).
With some few exceptions (e.g. \cite{LS},\cite{BU}) the Ore polynomial rings used so far 
in coding theory are mainly of automorphisms type with a (finite) field as base ring. This paper shows
how one can use general Ore extensions to not only define codes, but as well give 
their generic and control matrices.  Factorizations techniques in Ore polynomial 
rings play an important role in these questions and the interested reader can 
consult \cite{G}, \cite{LL} or \cite{LLO} for more information on this matter. 
Since they are intimately related to modules over Ore 
extensions and to factorizations, the pseudo-linear transformations will play an 
important role in this paper.
The reader may consult \cite{J},\cite{L} and \cite{L2} for more details on 
pseudo-linear transformations.
\begin{definitions}
Let $A$ be a ring with $1$ and $\sigma$ a ring endomorphism of $A$.
\begin{enumerate}
\item[(a)] An additive map $\delta \in End(A,+)$ is a $\sigma$-derivation if, for any $a,b\in A$, we have:
$$
 \de(ab)=\si(a)\de(b)+\de(a)b.
$$
\item[(b)] Let $\delta$ be a $\sigma$-derivation of a ring $A$.
The elements of the skew polynomial ring $R=A[t;\si,\de]$ are sums $\sum a_it^i$.
 They are added as ordinary polynomials and the multiplication is based on the
commutation law $$ta=\si(a)t+\de(a), \; for \; a\in A.$$
\item[(c)] The degree of a nonzero polynomial $f = a_0 + a_1t + a_2t^2+\dots a_nt^n\in R=A[t;\si,\de]$
is defined to be $\deg(f) = max\{i\vert a_i\ne 0\}$ and we put, as usual,
$\deg(0)=-\infty$.
\end{enumerate}
\end{definitions}

\begin{examples}
\label{examples of Skew polynomial rings}
{\rm
\begin{enumerate}
\item[(1)] If $\si=id.$ and $\de=0$ we have $A[t;\si,\de]=A[t]$, the usual polynomial
ring in a commuting variable.
If only $\si=id.$ but $\de\ne 0$ we denote $A[t;id.,\de]$ as
$A[t;\de]$ and speak of a polynomial ring of derivation type.
On the other hand, if $\de=0$ but $\si\ne id.$,
we write $A[t;\si,\de]$ as $A[t;\si]$ and refer to this Ore extension as a polynomial ring of endomorphism type.
\item[(2)] Let $\si$ stand for the usual conjugation of the complex
number $\mathbb C$ and consider $\mathbb C[t;\si,0]$.  Notice that, since $\si^2=id.$, we can check that $t^2$ is a central polynomial.
\item[(3)] Let $k$ be field, $R=k[x][t;id.;d/dx]$.  This is the weyl algebra.  The commutation law is $tx-xt=1$.  If char$k=0$ the Weyl algebra is a simple ring.  In contrast if
char$k=p>0$ then $t^p$ and $x^p$ are central elements.
\item[(4)] For $a\in A$, we define the inner $\si$-derivation induced by $a$ (denoted $d_{a,\si}$) in the following way: for $r\in
A$, $d_{a,\si}(r):=ar-\si(r)a$.  Let us remark that
$A[t;\si,d_{a,\si}]=A[t-a,\si]$.   Similarly, for an inner
automorphism $I_a$ induced by an invertible element $a\in A$ and
defined by $I_a(x)=axa^{-1}$ for $x\in A$, we have
$A[t;I_a]=A[a^{-1}t]$.  Let us mention that an easy computation
shows that if there exists a central element $c\in Z(A)$, where
$Z(A)$ denotes the center of $A$, such that $c-\sigma(c)$ is an
invertible element of $Z(A)$, then the derivation $\delta$ is inner
induced by $(c-\sigma(c))^{-1}\delta (c)$.  In particular, if $A$ is
a field then either $\sigma=id$ or $\delta$ is inner.  More
particularly, all Ore extensions built on a finite field $\mathbb
F_q$ are of the form $\mathbb F_q[t;\theta]$ where $\theta$ is an
automorphism of $\mathbb F_q$.
\item[(5)] It is well-known that finiteness conditions force 
($\sigma$)-derivations to be 
inner (Cf. e.g. \cite{A}).  
  We now give an easy example of a (finite) ring having a non-inner $\sigma$-derivation.  Let $K$ be any
 ring and $\sigma$ a non inner automorphism of $K$.  Consider the ring $A\subset M_2(K)$  defined by:
 $$
 A:=\left\lbrace \begin{pmatrix}
a & b \\
0 & a
\end{pmatrix} \; \vert \; a,b\in K,\;\sigma(a)=a \right\rbrace .
 $$
 We extend $\sigma$ to $A$ by letting it act on each coefficient of the matrices and define the
 additive map $\delta$ by setting:
$$
\delta \left(\begin{pmatrix}
a & b \\
0 & a
\end{pmatrix}\right)=\begin{pmatrix}
0 & \sigma(b) \\
0 & 0
\end{pmatrix}\quad {\rm for}\quad a,b \in K\quad {\rm with}\;\sigma(a)=a.
$$
One can check that this map is indeed a non inner sigma-derivation.  For instance
one can put $K=\mathbb{F}_q$ and let $\sigma$ be the Frobenius map.  Hence, in
this case, $A$ is a finite ring with a non-inner
$\sigma$-derivation.

\item[(6)] Let $p$ be a prime number, $n\in \mathbb N$ and $q=p^n$.  Consider
$R=\mathbb F_q$ the finite field with $q$ elements and $\theta$ the
Frobenius automorphism defined by $\theta(a)=a^p$ for $a\in \mathbb F_q$. Skew
polynomial ring $\mathbb{F}_q[t;\theta]$ have been used recently in
the context of noncommutative codes.  The main advantage of
$R=\mathbb{F}_q[t;\theta]$ versus the classical $\mathbb{F}_q[x]$ is
that a given polynomial $p(t)\in R$ admits generally many different
factorizations. For instance let us consider $\mathbb F_4=\mathbb
F_2[\alpha]$, where $\alpha^2+\alpha +1=0$, and some factorizations
of $t^4+1\in \mathbb{F}_4[t;\theta]$:
\[\begin{aligned}
t^4+1 &=(t^2+1)(t^2+1)\\
      &=(t^2+\alpha t +\alpha)(t^2+\alpha t +\alpha^2)\\
      &=(t^2+\alpha^2 t +\alpha^2)(t^2+\alpha^2 t +\alpha)\\
      &=(t^2+\alpha t +\alpha^2)(t^2+\alpha t +\alpha)=...
\end{aligned}\]
\end{enumerate}
}
In fact, it has been shown recently that factorizations in $\mathbb F_q[t;\theta]$ can be
worked out from factorizations in $\mathbb F_4[x]$ (cf. \cite{L2}).
\end{examples}

\section{Polynomial and pseudo-linear maps }

Let $A,\sigma$ and $\delta$ be a ring, an endomorphism and a
$\sigma$-derivation of $A$, respectively.  Let us put
$R=A[t;\sigma,\delta]$.\\

For any $f(t)\in R$ and $a\in A$ there
exists a unique $q(t)\in A[t,\sigma,\delta]$ and   a
unique $s\in A$ such that:
$$f(t)=q(t)(t-a) +
s.$$
\begin{definitions}
\label{definitions of polynomial maps and pseudo linear maps}
\begin{enumerate}
\item[(a)] With these notations, the (right) polynomial map
associated to $f(t)\in R$ is
$$f:A\longrightarrow A \quad {\rm given \; by}\quad
 f(a):=s$$
\item[(b)] For $i\ge 0$, the
right polynomial map determined by $t^i$ will be denoted by $N_i$.   With these notations one has that
$(\sum_{i=0}^nb_it^i)(a)=\sum_{i=0}^nb_iN_i(a)$ for any polynomial 
$f(t)=\sum_{i=0}^nb_it^i\in R$.
When $\delta=0$ one has $N_i(a)=\sigma^{i-1}(a)\sigma^{i-2}(a)\dots\sigma(a)a$, this justifies the notation $N_i$.
\item[(c)]  Let $_AV$ be a left $A$-module.  An additive map
$T:V\longrightarrow V$ such that,
for $\alpha \in A$ and $v\in V$,
$$
T(\alpha v)=\sigma(\alpha)T(v) + \delta(\alpha)v.
$$
is called a ($\si,\de$) pseudo-linear transformation
(or a ($\si,\de$)-PLT, for short).
\end{enumerate}
\end{definitions}

\begin{examples}
\label{examples of PLT}
\begin{enumerate}
\item If $\si=id.$ and $\de=0$ we get back the standard way of
evaluating a polynomial.  It should be noted though, that, since
$R$ is not commutative, we have to specify that this is a right
polynomial map.  For instance, although for $c\in A$
$f(t)=ct=tc\in R=A[t]$, the polynomial map we consider here
is the map $f:A\longrightarrow A$ defined by $f(a)=ca$, for any
$a\in A$.
\item Let $A,\sigma$ and $\delta$  be a ring, an endomorphism of $A$ and a
$\sigma$-derivation of $A$ respectively.
If $a\in A$, the map
$$
T_a:A\longrightarrow A
\quad x \mapsto T_a(x)=\sigma(x)a+\delta(x)
$$
is a ($\sigma,\delta$)-PLT defined on the left $A$-module: $_AA$.

$\diamond$ if $\sigma=id$ and $\delta=0$, we get $T_a(x)=xa$.\\
$\diamond$ if $a=0$, we get $T_0=\delta$\\
$\diamond$ if $a=1$ and $\delta=0$, we get that
$T_1=\sigma$.\\
$\diamond$ if $a=1$, we get that
$T_1=\sigma+\delta$.\\

\item Let $V$ be a free left $A$-module with basis $\beta=\{e_1,\dots ,e_n\}$ and let $T:V \rightarrow V$ be a ($\si, \de$)-PLT.
This gives rise to a ($\si,\de$)-PLT on the left $A$-module $A^n$ as follows:
first define $C=(c_{ij})\in M_n(A)$ by
$
T(e_i)=\sum_i^nc_{ij}e_j.
$
  Then we extend component-wise $\si$ and $\de$ to the ring
$A^n$.  Finally we then define a ($\si,\de$)-PLT on $A^n$
(considered as a left $A$-module) by
$T_C(\underline{v})=\sigma(\underline{v})C+\delta(\underline{v})$, for $\underline{v}\in
A^n$. Indeed, it is easy to check that we have
$T_C(\alpha \underline{v})=\sigma(\alpha) T_C(\underline{v}) + \delta(\alpha)\underline{v} $.
\item[(4)] Let us remark that powers of a ($\sigma,\delta$)-pseudo-linear transformation defined on a left $A$-module $V$ are usually not pseudo-linear.  
For instance it is easy to check that, for $\alpha\in A$ and $v\in V$, we have
$$
T^n(\alpha v)=\sum_{i=0}^nf^n_i(\alpha)T^i(v),
$$
where $f^n_i$ stands for the sum of all words in $\sigma$ and $\delta$ having 
$n-i$ letters $\delta$ and $i$ letters $\sigma$.
\end{enumerate}
\end{examples}

In order to increase the sources of Codes, ring structures (and two sided ideals) have been replaced by modules
(and one-sided ideals).  In the case of modules over Ore extensions the next proposition shows that pseudo-linear maps are unavoidable.  In fact, they are also very useful since they are intimately related to
factorizations.  For instance they offer a generalization of the classical fact that in a commutative setting the evaluation map is a ring homomorphism (cf.  Lemma \ref{pq(T)=p(T)q(T)}).
\begin{proposition}
\label{PLT and R modules} Let $A$ be a ring $\si\in End(A)$ and
$\de$ a $\si$-derivation of $A$.   For an additive group $(V,+)$ the
following conditions are equivalent:
\begin{enumerate}
\item[(i)] $V$ is a  left $R=A[t;\si, \de]$-module;
\item[(ii)] $V$ is a left $A$-module and there exists a $(\si,\de)$ pseudo-linear
transformation $T:V\longrightarrow V$;
\item[(iii)] There exists a ring homomorphism $\Lambda: R\longrightarrow
End(V,+)$.
\end{enumerate}
\end{proposition}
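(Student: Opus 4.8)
The plan is to establish the cycle by proving $(i)\Leftrightarrow(iii)$ and $(i)\Leftrightarrow(ii)$, with the step $(ii)\Rightarrow(iii)$ carrying the actual content. The equivalence $(i)\Leftrightarrow(iii)$ requires no work: endowing the abelian group $(V,+)$ with a left $R$-module structure is, by the very definition of a module, the same datum as a unital ring homomorphism $\Lambda\colon R\longrightarrow End(V,+)$, under the correspondence $\Lambda(f)(v)=f\cdot v$. For $(i)\Rightarrow(ii)$, I would restrict scalars along the inclusion $A\hookrightarrow R$ to make $V$ a left $A$-module and set $T(v):=t\cdot v$; additivity of $T$ is clear, and for $\alpha\in A$, $v\in V$ the commutation law $t\alpha=\si(\alpha)t+\de(\alpha)$ holding in $R$ yields
$$T(\alpha v)=(t\alpha)\cdot v=\bigl(\si(\alpha)t+\de(\alpha)\bigr)\cdot v=\si(\alpha)T(v)+\de(\alpha)v,$$
so $T$ is a $(\si,\de)$-PLT.

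The heart is $(ii)\Rightarrow(iii)$. Write $\mu\colon A\longrightarrow End(V,+)$ for the given left $A$-module structure, $\mu(\alpha)(v)=\alpha v$, and let $T$ be the PLT. I would define $\Lambda\colon R\longrightarrow End(V,+)$ by $\Lambda\bigl(\sum_i a_it^i\bigr)=\sum_i\mu(a_i)\circ T^i$; it is visibly additive, unital, restricts to $\mu$ on $A$, and sends $t$ to $T$, so everything hinges on showing $\Lambda$ is multiplicative. Using additivity, I would reduce multiplicativity to two identities valid for all $h\in R$ and $\alpha\in A$: (1) $\Lambda(\alpha h)=\mu(\alpha)\circ\Lambda(h)$, which is immediate because left-multiplying $h$ by $\alpha$ left-multiplies each coefficient and $\mu$ is a ring map; and (2) $\Lambda(th)=T\circ\Lambda(h)$. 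Identity (2) is exactly where the PLT hypothesis enters: for $h=\sum_i a_it^i$ one has $th=\sum_i\bigl(\si(a_i)t+\de(a_i)\bigr)t^i$, hence $\Lambda(th)=\sum_i\bigl(\mu(\si(a_i))\circ T+\mu(\de(a_i))\bigr)\circ T^i$, and the operator form of the PLT relation, $T\circ\mu(a_i)=\mu(\si(a_i))\circ T+\mu(\de(a_i))$, collapses this to $T\circ\sum_i\mu(a_i)\circ T^i=T\circ\Lambda(h)$.

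From (1) and (2), $\Lambda(t^ih)=T^i\circ\Lambda(h)$ follows by a short induction on $i$ (applying (2) to the polynomial $t^{i-1}h$), whence $\Lambda\bigl((\sum_i a_it^i)h\bigr)=\sum_i\mu(a_i)\circ T^i\circ\Lambda(h)=\Lambda(\sum_i a_it^i)\circ\Lambda(h)$, and $(iii)$ is proved; then $(iii)\Rightarrow(i)$ by the definitional equivalence already noted, so in particular $(ii)\Rightarrow(i)$. The only delicate point is this induction: it is routine but must feed the PLT relation in at precisely the spot marked by identity (2) — this is just the universal property of the Ore extension $R=\Ore$ rephrased module-theoretically — while the remaining implications are pure bookkeeping.
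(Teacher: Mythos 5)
Your proof is correct and follows essentially the same route as the paper: the cycle $(i)\Rightarrow(ii)\Rightarrow(iii)\Rightarrow(i)$ with $T$ given by left multiplication by $t$, $\Lambda(f)=f(T)$, and the standard identification of module structures with ring homomorphisms into $End(V,+)$. The only difference is that you actually verify multiplicativity of $\Lambda$ via the operator identity $T\circ\mu(\alpha)=\mu(\si(\alpha))\circ T+\mu(\de(\alpha))$, a check the paper omits; your verification is sound.
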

\begin{proof}
(i)$\Longrightarrow$ (ii) The pseudo-linear map is given by the
left multiplication by $t$.

(ii)$\Longrightarrow$ (iii)  The ring homomorphism $\Lambda:R \longrightarrow End(V,+)$ is
defined by $\Lambda (f(t))=f(T)$, where, for $f(t)=\sum_{i=0}^na_it^i\in R$,  $f(T)$
stands for $\sum_{i=0}^na_iT^i\in End(V,+)$.

(iii)$\Longrightarrow$(i) This is classical.
\end{proof}

\vspace{4mm}

As a special case of the example (3) above let us mention the important pseudo-
linear transformation associated to
a given monic polynomial $f$ of degree $n$
(or rather to its companion matrix $C_f$).   The left $A$-module $V$ is, in this
case, $R/Rf$.  This is given in the following definition.
\begin{definition} Let $f(t)=\sum_{i=0}^na_it^i\in A[t;\sigma,\delta]$ be a monic 
polynomial of degree
$n$ and let 
$$C_f=\begin{pmatrix}
0 & 1 & 0 & \dots & 0 \\ 
0 & 0 & 1 & 0 & \dots \\ 
\vdots & \vdots & \vdots & \vdots & \vdots \\ 
0  & 0 & 0 & 0 & 1 \\ 
-a_0 & -a_1 & \dots & \dots & -a_{n-1}
\end{pmatrix} $$
be its companion matrix. Then the map
$$
T_f:A^n\longrightarrow A^n\; {\rm defined\; by} \;
T_f(x_1,...,x_n):=(\sigma(x_1),...,\sigma(x_n))C_f
+(\delta(x_1),...,\delta(x_n))
$$
is a pseudo-linear transformation called the pseudo-linear transformation
associated to  $f$.
\end{definition}

\begin{example}
For $a\in A$ the map:\\
\hspace*{3cm} $T_a:A\longrightarrow A$\\
\hspace*{4cm}$ x \longrightarrow T_a(x)=\sigma(x)a+\delta(x)$\\
is the  pseudo-linear transformation associated to $f(t)=t-a$.
\end{example}

\begin{proposition}
\label{p(T_a)(1)=p(a)}
 Let $a$ be an element in $A$ and $p(t)\in A[t;\sigma,\delta]$.  Then:
\begin{enumerate}
\item[(1)] $N_0(a)=1$ and for $i\ge 0$, $N_{i+1}(a)=\sigma(N_i(a))a+\delta(N_i(a))$.
\item[(2)] $p(T_a)(1)=p(a)$.
\end{enumerate}
\end{proposition}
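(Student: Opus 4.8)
The plan is to prove (1) directly from the uniqueness of right division by $t-a$, observe that the resulting recursion is exactly the statement $N_{i+1}(a)=T_a(N_i(a))$, and then obtain (2) by feeding this into the ring homomorphism $\Lambda$ supplied by Proposition~\ref{PLT and R modules} applied to the left $A$-module $A$ equipped with the pseudo-linear transformation $T_a$.

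For (1), the equality $N_0(a)=1$ is immediate, since $t^0=1=0\cdot(t-a)+1$ exhibits $1$ as the remainder. For the recursion, I would begin from the defining decomposition $t^i=q_i(t)(t-a)+N_i(a)$ with $q_i(t)\in R$, multiply on the left by $t$, and push $t$ past the constant term using the commutation law $tc=\sigma(c)t+\delta(c)$, getting $t^{i+1}=tq_i(t)(t-a)+\sigma(N_i(a))t+\delta(N_i(a))$. Rewriting $\sigma(N_i(a))t=\sigma(N_i(a))(t-a)+\sigma(N_i(a))a$ and regrouping yields
$$t^{i+1}=\bigl(tq_i(t)+\sigma(N_i(a))\bigr)(t-a)+\bigl(\sigma(N_i(a))a+\delta(N_i(a))\bigr).$$
By uniqueness of quotient and remainder in the division by $t-a$, the remainder of $t^{i+1}$ is $\sigma(N_i(a))a+\delta(N_i(a))$, i.e. $N_{i+1}(a)=\sigma(N_i(a))a+\delta(N_i(a))=T_a(N_i(a))$. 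An immediate induction then gives $N_i(a)=T_a^i(1)$ for all $i\ge 0$, the base case being $N_0(a)=1=T_a^0(1)$.

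For (2), write $p(t)=\sum_{i=0}^n b_it^i$. By Definition~\ref{definitions of polynomial maps and pseudo linear maps}(b) we have $p(a)=\sum_{i=0}^n b_iN_i(a)$. Viewing $A$ as a left $A$-module with pseudo-linear transformation $T_a$, Proposition~\ref{PLT and R modules} provides the ring homomorphism $\Lambda\colon R\to End(A,+)$ with $\Lambda(p)=p(T_a)=\sum_{i=0}^n b_iT_a^i$, each $b_i$ acting by left multiplication; evaluating at $1$ and using $T_a^i(1)=N_i(a)$ from part (1) gives $p(T_a)(1)=\sum_{i=0}^n b_iT_a^i(1)=\sum_{i=0}^n b_iN_i(a)=p(a)$. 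The only step needing any care is the commutation bookkeeping in (1)—correctly moving $t$ past the coefficient $N_i(a)\in A$ and recognizing the new quotient/remainder pair as the unique one—and this is routine; there is no genuine obstacle.
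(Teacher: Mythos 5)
Your proof is correct and follows essentially the same route as the paper: the same left-multiplication by $t$ of the decomposition $t^i=q_i(t)(t-a)+N_i(a)$, the same regrouping and appeal to uniqueness of the remainder, and the same induction establishing $N_i(a)=T_a^i(1)$ before expanding $p(T_a)(1)$ term by term. The only cosmetic difference is that you route the final step through the homomorphism $\Lambda$ of Proposition~\ref{PLT and R modules}, whereas the paper simply unwinds the definition of $p(T_a)$ directly; the computation is identical.
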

\begin{proof}
(1) Since $N_i(a)$ is the remainder upon right division of $t^i$ by
$t-a$, we have $t^{i+1}=tt^i=t(q_i(t)(t-a)+N_i(a))=tq_i(t)(t-
a)+\sigma(N_i(a))t+\delta(N_i(a))=( tq_i(t)+\sigma(N_i(a)) )(t-
a)+\sigma(N_i(a))a+\delta(N_i(a))$.  This gives the required
equality.

\noindent (2) It is enough to show that, for $i\ge 0$,
$N_i(a)=T_a^i(1)$. This is clear for $i=0$.  Using the point (a)
above and an induction we get $N_{i+1}(a)=\sigma(N_i(a))a+
\delta(N_i(a))=T_a(N_i(a))=T_a(T_a^i(1))=T_a^{i+1}(1)$.
\end{proof}

\begin{lemma}
\label{pq(T)=p(T)q(T)}
Let $T:V \longrightarrow V$ be a ($\sigma, \delta$) pseudo-linear transformation defined on a left $A$-module $V$.  Then for any polynomial $p(t),q(t)\in R=
A[t;\sigma,\delta]$ we have $(p(t)q(t))(T)=p(T)q(T)\in End(V,+)$.
\end{lemma}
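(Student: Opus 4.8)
The key point is that Proposition \ref{PLT and R modules} already tells us that a $(\sigma,\delta)$-PLT $T$ on a left $A$-module $V$ makes $V$ into a left $R$-module via $f(t)\cdot v = f(T)(v)$, and equivalently gives a ring homomorphism $\Lambda\colon R\to \mathrm{End}(V,+)$ with $\Lambda(f(t)) = f(T)$. Since $\Lambda$ is a ring homomorphism, $\Lambda(p(t)q(t)) = \Lambda(p(t))\Lambda(q(t))$, which is exactly the claimed identity $(p(t)q(t))(T) = p(T)q(T)$. So at one level the lemma is an immediate consequence of what is already proved.

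However, the step (ii)$\Rightarrow$(iii) in Proposition \ref{PLT and R modules} was only asserted ("The ring homomorphism $\Lambda$ is defined by $\Lambda(f(t)) = f(T)$") without checking multiplicativity, so a self-contained proof should verify that $f\mapsto f(T)$ respects products; this is really the content of the lemma. First I would reduce to monomials: since both sides are additive in $p$ and in $q$ (the map $f\mapsto f(T) = \sum a_i T^i$ is additive in $f$ because addition in $R$ is coefficientwise), it suffices to prove $(at^m \cdot bt^n)(T) = (at^m)(T)\circ(bt^n)(T)$ for $a,b\in A$ and $m,n\ge 0$. Now $(at^m)(T) = a\,T^m$ (meaning the composite of scalar multiplication by $a$ with $T^m$) and $(bt^n)(T) = b\,T^n$, so the right-hand side is $v\mapsto a\,T^m(b\,T^n(v))$.

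The heart of the matter is then the computation of $T^m$ applied to $b\cdot w$ for $w\in V$, i.e. understanding how $T^m$ interacts with scalar multiplication. By Example \ref{examples of PLT}(4) we have $T^m(bw) = \sum_{i=0}^m f^m_i(b)\,T^i(w)$, where $f^m_i$ is the sum of all words in $\sigma,\delta$ with $i$ letters $\sigma$ and $m-i$ letters $\delta$. On the other hand, in $R$ the commutation law $ta = \sigma(a)t + \delta(a)$ iterated gives $t^m b = \sum_{i=0}^m f^m_i(b)\, t^i$ with exactly the same coefficients $f^m_i(b)$ — indeed this identity in $R$ is how one derives the formula in Example \ref{examples of PLT}(4) in the first place. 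Hence $at^m \cdot bt^n = a\big(\sum_{i=0}^m f^m_i(b) t^i\big) t^n = \sum_{i=0}^m a f^m_i(b)\, t^{i+n}$, so the left-hand side is $v\mapsto \sum_{i=0}^m a f^m_i(b)\,T^{i+n}(v) = \sum_{i=0}^m a f^m_i(b)\,T^i(T^n(v))$, which equals $a\,T^m(b\,T^n(v))$ by Example \ref{examples of PLT}(4) applied with $w = T^n(v)$. This matches the right-hand side, completing the monomial case and hence the lemma.

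The main obstacle — really the only thing requiring care — is to make sure the combinatorial coefficients $f^m_i$ appearing when one expands $t^m b$ inside $R$ are literally the same as those appearing when one expands $T^m(bw)$ in $V$; both come from the same recursive rule forced by $ta = \sigma(a)t + \delta(a)$ (respectively $T(\alpha v) = \sigma(\alpha)T(v) + \delta(\alpha)v$), so one should either invoke Example \ref{examples of PLT}(4) directly or, to be fully rigorous, prove the common identity $t^m b = \sum_{i=0}^m f^m_i(b)t^i$ and its module counterpart simultaneously by induction on $m$, checking that the single-$t$ step $t\cdot\big(\sum c_i t^i\big)$ and $T\big(\sum c_i T^i(v)\big)$ produce matching recursions. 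After that, the passage from monomials to general polynomials is pure bilinearity and needs no further comment.
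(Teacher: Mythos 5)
Your proposal is correct, and its substance goes beyond what the paper actually writes. The paper's own proof is precisely your first paragraph: it recalls that $T$ endows $V$ with a left $R$-module structure via $t\cdot v:=T(v)$ and reads the identity $(p(t)q(t))(T)=p(T)q(T)$ off from the associativity $(pq)\cdot v=p\cdot(q\cdot v)$, i.e.\ from the multiplicativity of the map $\Lambda$ of Proposition \ref{PLT and R modules}. You rightly observe that this is close to circular: the step (ii)$\Rightarrow$(iii) of Proposition \ref{PLT and R modules} only asserts that $\Lambda(f)=f(T)$ is a ring homomorphism, and checking that assertion is exactly the content of Lemma \ref{pq(T)=p(T)q(T)}. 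Your second and third paragraphs supply the verification the paper omits: reduce to monomials by biadditivity of both sides, note that $(at^m)(T)=a\,T^m$ and $(bt^n)(T)=b\,T^n$, and then match the expansion $t^mb=\sum_{i=0}^m f^m_i(b)\,t^i$ in $R$ against $T^m(bw)=\sum_{i=0}^m f^m_i(b)\,T^i(w)$ in $End(V,+)$, both of which are produced by the same recursion coming from $ta=\sigma(a)t+\delta(a)$ and $T(\alpha v)=\sigma(\alpha)T(v)+\delta(\alpha)v$ (the identity of Example \ref{examples of PLT}(4)); the two computations of $(at^m\,bt^n)(T)(v)$ then agree term by term. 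So your route is genuinely more self-contained: the paper's version buys brevity at the cost of resting on an unproved multiplicativity claim, while yours makes the lemma --- and hence the implication (ii)$\Rightarrow$(iii) of Proposition \ref{PLT and R modules} --- stand on its own. The only point needing care, which you correctly flag, is that the coefficients $f^m_i$ in the ring identity and in the module identity are literally the same, which a simultaneous induction on $m$ settles.
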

\begin{proof}
Let us recall that if $p(t)=\sum_ia_it^i\in R$ then $p(T)$ is the additive
endomorphism defined by $p(T)(v)=\sum_ia_iT^i(v)$, for $v\in V$.
Since for $v\in V$ the left
$R$-module structure of $V$ is induced by $t.v:=T(v)$, the equality given in the
 lemma is in fact a simple translation of the fact that $(p(t)q(t)).v=p(t).
 (q(t).v)$ for $v\in V, \;p(t), q(t)\in R$.
\end{proof}

\vspace{3mm}

The formula $p(T_a)(1)=p(a)$ in Proposition \ref{p(T_a)(1)=p(a)} can be interpreted
as saying that $p(t)\in R(t-a)$ if and only if $p(T_a)(1)=0$.
This can be generalized as follows: for a monic polynomial 
$f(t)\in R=A[t;\sigma,\delta]$
we denote, as earlier, $T_f$ the pseudo-linear map defined on $A^n$ by the companion matrix of
$f$.  We then have $p(t)\in Rf(t)$ if and only if $p(T_f)(1,0,\dots,0)=(0,\dots,0)$
(cf. Theorem 1.10 in \cite{L2}).
Making use of the above lemma \ref{pq(T)=p(T)q(T)} we then easily get
that, for $p(t),q(t)\in R$ with $\deg(q)<\deg(f)$, $p(t)q(t)\in Rf$ if and
only if $p(T_f)q(T_f)(1,0,\dots,0)=(0,\dots,0)=p(T_f)(\underline{q})$.
We refer the reader
to \cite{L2} for details.  For easy reference, let us sum up this in the following lemma.

\vspace{3mm}

\begin{lemma}
\label{f divise à droite pq}
Let $f(t),p(t),q(t)$ be polynomials in $R=\Ore$ such that $f(t)$ is monic 
and $\deg(q)<\deg(f)=n$ then
$p(t)q(t)\in Rf(t)$ if and only if $p(T_f)(\underline{q})=(0,\dots,0)$, where, for 
$q(t)=\sum_{i=0}^{n-1} q_it^i$, we denote $\underline{q}$ the $n$-tuple 
$(q_0,q_1,\dots,q_{n-1})$.
\end{lemma}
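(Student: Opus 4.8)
The plan is to reduce the divisibility $p(t)q(t)\in Rf(t)$ to the one-polynomial criterion recalled just above the statement, namely that for $g(t)\in R$ one has $g(t)\in Rf(t)$ if and only if $g(T_f)(1,0,\dots,0)=(0,\dots,0)$ (Theorem~1.10 of \cite{L2}), and then to split the operator $(pq)(T_f)$ by means of Lemma~\ref{pq(T)=p(T)q(T)}. Concretely, applying the criterion to $g=pq$ and writing $(pq)(T_f)=p(T_f)q(T_f)$ recasts the condition as $p(T_f)\bigl(q(T_f)(1,0,\dots,0)\bigr)=(0,\dots,0)$, so the whole proof comes down to identifying the vector $q(T_f)(1,0,\dots,0)$ with $\underline{q}$.

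For that identification I would first check the elementary fact that, denoting by $e_j$ the $n$-tuple with $1$ in position $j$ and $0$ elsewhere, one has $T_f^{\,i}(e_1)=e_{i+1}$ for $0\le i\le n-1$. This is a one-line induction: because $\sigma(1)=1$ and $\delta(1)=0$, the pseudo-linear map $T_f$ sends a standard basis vector $e_i$ to $e_iC_f$, which is the $i$-th row of the companion matrix $C_f$, and for $i\le n-1$ that row is exactly $e_{i+1}$. Since $\deg(q)<\deg(f)=n$, writing $q(t)=\sum_{i=0}^{n-1}q_it^i$ all the relevant powers stay in this range, and therefore
$$q(T_f)(1,0,\dots,0)=\sum_{i=0}^{n-1}q_i\,T_f^{\,i}(e_1)=\sum_{i=0}^{n-1}q_i\,e_{i+1}=(q_0,\dots,q_{n-1})=\underline{q}.$$

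Putting the pieces together yields the asserted equivalence: $p(t)q(t)\in Rf(t)$ if and only if $(pq)(T_f)(1,0,\dots,0)=(0,\dots,0)$, and, by Lemma~\ref{pq(T)=p(T)q(T)} together with the coordinate identification above, this holds if and only if $p(T_f)(\underline{q})=(0,\dots,0)$. I do not expect a genuine obstacle here; the statement is a convenient repackaging of the facts developed just before it. The only place where a little care is needed is the range $0\le i\le n-1$ in the computation above — this is precisely where the hypothesis $\deg(q)<\deg(f)$ enters, guaranteeing both that $\underline{q}$ is a well-defined $n$-tuple and that $q(T_f)(e_1)=\underline{q}$. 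Should one prefer not to invoke \cite{L2}, the very same computation also proves the one-polynomial criterion: writing $g=hf+r$ with $\deg(r)<n$ and using $f(T_f)(e_1)=0$, which one reads off the last row of $C_f$ since $f$ is monic, gives $g(T_f)(e_1)=r(T_f)(e_1)=\underline{r}$, and this vanishes exactly when $r=0$, i.e.\ when $g\in Rf$.
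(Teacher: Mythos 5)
Your proof is correct and follows the same route the paper takes in the discussion immediately preceding the lemma: apply the criterion $g\in Rf \Leftrightarrow g(T_f)(1,0,\dots,0)=0$ to $g=pq$, split $(pq)(T_f)=p(T_f)q(T_f)$ via Lemma~\ref{pq(T)=p(T)q(T)}, and identify $q(T_f)(1,0,\dots,0)$ with $\underline{q}$. Your explicit verification that $T_f^{\,i}(e_1)=e_{i+1}$ for $0\le i\le n-1$ (and the optional self-contained derivation of the one-polynomial criterion) just fills in details the paper delegates to \cite{L2} and to Proposition~\ref{bijection between R/Rf and A^n}.
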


For a monic polynomial $f(t)\in R=A[t;\sigma,\delta]$ of degree $n$ let us 
mention the following proposition which shows how to
translate results from the $R/Rf$ to $A^n$.

\begin{proposition}
\label{bijection between R/Rf and A^n}
Let $f(t) \in R=\Ore$ be a monic polynomial of degree $n>0$.
The map $\varphi : R/Rf(t)\longrightarrow  A^n$ given by $\varphi(p+Rf)=p(T_f)(1,0,
\dots,0)$ is a bijection.
\end{proposition}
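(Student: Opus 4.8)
The plan is to show that $\varphi$ is both injective and surjective, using the division algorithm in $R$ together with Lemma~\ref{f divise à droite pq}.

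First I would establish well-definedness and injectivity simultaneously. The map $\varphi$ sends $p+Rf$ to $p(T_f)(1,0,\dots,0)$; by the remark preceding Lemma~\ref{f divise à droite pq} (i.e. Theorem~1.10 of \cite{L2}), for $g(t)\in R$ we have $g(t)\in Rf(t)$ if and only if $g(T_f)(1,0,\dots,0)=(0,\dots,0)$. Hence if $p+Rf=p'+Rf$, then $p-p'\in Rf$, so $(p-p')(T_f)(1,0,\dots,0)=0$, and since $(p-p')(T_f)=p(T_f)-p'(T_f)$ as additive endomorphisms, $\varphi$ is well-defined. Conversely, if $\varphi(p+Rf)=\varphi(p'+Rf)$, the same computation gives $(p-p')(T_f)(1,0,\dots,0)=0$, hence $p-p'\in Rf$, i.e. $p+Rf=p'+Rf$; so $\varphi$ is injective.

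Next I would prove surjectivity. Given an arbitrary $\underline{v}=(v_0,v_1,\dots,v_{n-1})\in A^n$, set $q(t)=\sum_{i=0}^{n-1}v_it^i\in R$, a polynomial of degree $<n$. Using Proposition~\ref{p(T_a)(1)=p(a)}(2) in the generalized form $q(T_f)(1,0,\dots,0)=\underline{q}=(v_0,\dots,v_{n-1})$ — which is exactly the content recorded in the discussion before Lemma~\ref{f divise à droite pq}, where it is noted that $q(T_f)(1,0,\dots,0)=\underline{q}$ for $\deg q<n$ — we get $\varphi(q+Rf)=\underline{v}$. Thus every element of $A^n$ is hit, and $\varphi$ is surjective. (Alternatively one can argue by counting/dimension when $A$ is a field, but the explicit preimage $q$ works over any ring $A$ and is cleaner.)

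The main obstacle is making precise the identity $q(T_f)(1,0,\dots,0)=\underline{q}$ for $\deg q<n$: one must check that $T_f^i(1,0,\dots,0)$ is the $i$-th standard basis vector of $A^n$ for $0\le i\le n-1$. This follows from the shape of the companion matrix $C_f$: applying $T_f$ to the standard basis vector $e_i$ gives, since $\sigma$ and $\delta$ act componentwise and $e_i$ has a single entry equal to $1\in A$ (with $\sigma(1)=1$, $\delta(1)=0$), simply the $i$-th row of $C_f$, which is $e_{i+1}$ for $i<n$. An easy induction starting from $T_f^0(1,0,\dots,0)=e_1$ then yields $T_f^i(1,0,\dots,0)=e_{i+1}$ for all $0\le i\le n-1$, and linearity of $q(T_f)$ over the coefficients $v_i$ finishes the computation. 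Everything else is a direct translation via Lemma~\ref{pq(T)=p(T)q(T)} and the cited divisibility criterion.
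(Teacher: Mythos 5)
Your argument is correct and rests on the same underlying fact as the paper's one-line proof, namely that $T_f$ realizes left multiplication by $t$ on $R/Rf$ with respect to the basis $\{1,t,\dots,t^{n-1}\}$: your computation $T_f^i(1,0,\dots,0)=e_{i+1}$ for $0\le i\le n-1$ is exactly the explicit form of that identification (using $\sigma(1)=1$, $\delta(1)=0$), and your injectivity step is the divisibility criterion the paper quotes from \cite{L2} just before Lemma~\ref{f divise à droite pq}. So you are supplying the details the authors declare ``clear'' rather than taking a different route; the proof is complete and sound.
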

\begin{proof}
Since $T_f$ represents the left multiplication by $t$ on $R/Rf(t)$ and since this
corresponds to the pseudo-linear transformation $T_f$, the above bijection is
clear.
\end{proof}

\vspace{3mm}

The above bijection endows $A^n$ with a left $R=A[t;\sigma,\delta]$-module structure.

Let us remark that if $(a_0,a_1,\dots,a_{n-1})\in A^n$ then
$\varphi(\sum_{i=0}^{n-1}a_it^i+Rf)=(a_0,\dots,a_{n-1})$.  Notice also that
the practical effect of this proposition is a way of computing the remainder of
the euclidean right division by $f(t)$.

\vspace{4mm}

\section{Generic and control matrices of ($\sigma,\delta$)-codes}

\vspace{4mm}

Let $A$ be a ring, $\sigma,\delta$ be an endomorphism and a $\sigma$-derivation
of $A$ respectively.

\begin{definitions}
Let $f(t)$ be a monic polynomial in $R=\Ore$.
A cyclic ($f, \si,\de$)-code
is the image $\varphi (Rg/Rf)$ of the cyclic module $Rg/Rf$ where $g(t)\in \Ore$ is a monic polynomial such that $f(t)\in Rg(t)$ and $\varphi$ is the map described in Proposition \ref{bijection between R/Rf and A^n}.
A cyclic ($f,\sigma,\delta$)-code $C\subseteq A^n$ is then the subset of $A^n$ consisting of the coordinates of the elements of $Rg/Rf$ in the basis
$\{1,t,\dots, t^{n-1}\}$ for some right monic factor $g(t)$ of $f(t)$.
\end{definitions}

In the next theorem we answer a few natural questions related to these notions.

\vspace{2mm}

\begin{theorem}
\label{generic matrices}
Let $g(t):=g_0+g_1t+\dots +g_{r}t^r\in R$ be a monic polynomial ($g_r=1$).  With the above notations we have
\begin{enumerate}
\item[(a)] The code corresponding to $Rg/Rf$ is a free left $A$-module
of dimension $n-r$ where $\deg(f)=n$ and $\deg(g)=r$.
\item[(b)] If $v:=(a_0,a_1,\dots,a_{n-1})\in C$ then $T_f(v)\in C$.
\item[(c)] The rows of the matrix generating the code $C$
are given by $$(T_f)^k(g_0,g_1,\dots,g_r,0,\dots,0),
\quad {\rm for}\;\, 0\le k \le n-r-1.$$ 
\end{enumerate}
\end{theorem}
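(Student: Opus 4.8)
The plan is to treat all three parts through the single identification $\varphi\colon R/Rf \to A^n$ of Proposition \ref{bijection between R/Rf and A^n}, which is both a bijection of sets and (by the remark following it) a transport of the left $R$-module structure, with $t$ acting as $T_f$ on $A^n$. Under $\varphi$ the code $C$ is exactly the image of the submodule $Rg/Rf \subseteq R/Rf$.

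For (a), I would argue on the module side first. Since $g$ is monic of degree $r$ and $f \in Rg$, write $f = hg$ with $h$ monic of degree $n-r$. Then $Rg/Rf = Rg/Rhg$, and right multiplication by $g$ gives a left $A$-module (indeed left $R$-module) isomorphism $R/Rh \to Rg/Rhg$, $p + Rh \mapsto pg + Rhg$; it is well-defined and injective because $pg \in Rhg \iff p \in Rh$ (degree/monic cancellation on the right, exactly the kind of fact underlying Lemma \ref{f divise à droite pq}). Since $h$ is monic of degree $n-r$, the set $\{1, t, \dots, t^{n-r-1}\}$ is a free left $A$-basis of $R/Rh$ by the Euclidean right-division algorithm, so $R/Rh$, hence $Rg/Rf$, hence $C$, is free of rank $n-r$. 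Transporting the basis: the images $g, tg, \dots, t^{n-r-1}g$ modulo $Rf$ form a free left $A$-basis of $Rg/Rf$, and applying $\varphi$ turns $t^k g$ into $(T_f)^k(\underline{g})$, where $\underline{g} = (g_0, g_1, \dots, g_r, 0, \dots, 0)$ is the coordinate vector of $g$ (legitimate since $\deg g = r < n$). This simultaneously proves (c): those $n-r$ vectors $(T_f)^k(\underline g)$, $0 \le k \le n-r-1$, are a basis of $C$, i.e. the rows of a generator matrix.

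Part (b) is then immediate: if $v = \varphi(p + Rf)$ with $p + Rf \in Rg/Rf$, then $T_f(v) = \varphi(tp + Rf)$, and $tp \in Rg$ because $Rg$ is a left ideal; so $T_f(v) \in C$. Equivalently one can read it off from (c), since $T_f$ sends the $k$-th listed generator to the $(k+1)$-st for $k < n-r-1$, and sends the last one into the span of the others because $t \cdot t^{n-r-1}g = t^{n-r}g \in Rg$ reduces modulo $Rf$ to an $A$-combination of $g, tg, \dots, t^{n-r-1}g$ (using $f = hg$ with $h$ monic of degree $n-r$).

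The only genuinely delicate point is the well-definedness and injectivity of the map $R/Rh \to Rg/Rf$, which rests on the implication $pg \in Rf \Rightarrow p \in Rh$ when $f = hg$ with $g$, $h$ monic; over a noncommutative ring one must invoke that right division by a monic polynomial is exact, so that $pg \in R(hg)$ forces $p \in Rh$. This is precisely the content already used in Lemma \ref{f divise à droite pq} and in Theorem 1.10 of \cite{L2}, so I would simply cite it rather than reprove it. Everything else is bookkeeping with $\varphi$ and the relation $\varphi(t^k p + Rf) = (T_f)^k \varphi(p + Rf)$, which follows from Lemma \ref{pq(T)=p(T)q(T)}.
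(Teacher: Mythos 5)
Your proof is correct and follows essentially the same route as the paper's: writing $f=hg$ with $h$ monic, identifying $Rg/Rf\cong R/Rh$ to get freeness of rank $n-r$, transporting the basis $g,tg,\dots,t^{n-r-1}g$ through $\varphi$ to obtain the generator rows $(T_f)^k(\underline{g})$, and deducing (b) from the fact that $t$ acts as $T_f$ and $Rg$ is a left ideal. The only difference is that you spell out the well-definedness and injectivity of right multiplication by $g$, which the paper leaves implicit.
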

\begin{proof}
\noindent (a) We have $f=hg$ for some monic polynomial $h\in R$.  Hence as left
$R$-modules we have also $Rg/Rf\cong R/Rh$.  Since $h$ is monic $R/Rh$ is a free
$A$-module of rank $\deg(h)=n-r$.

\noindent (b) $v=(a_0,\dots ,a_{n-1})\in C$ if and only if
$q(t):= \sum_{i=0}^na_it^i+ Rf\in Rg/Rf$.  Since $tq(t)\in Rg/Rf$ and left
multiplication by $t$ on $R/Rf$ corresponds to the action of $T_f$ on $A^n$, we do get that $T_f((a_0,\dots , a_{n-1}))\in C$, as required.

\noindent (c) Clearly for any $k\ge 0$ we have that $T_f^k(g_0,\dots,g_{r},0,\dots,0)\in
C$.  On the other hand it is clear that $g+Rf,tg+Rf,\dots, t^{n-r-1}g+Rf$ are
left linearly independent over $A$ and hence constitutes a basis of $Rg/Rf$.  In
terms
of code words this gives that the vectors $T_f^{k}((g_0,\dots , g_{r},0,\dots,0))$ for
$0\le k \le n-r-1$ form a left $A$-basis of the code $C$.
\end{proof}

\vspace{4mm}

\begin{examples}
\label{examples of generic matrices}
In the five first examples hereunder $A=\mathbb F_{p^n}$ stands for a finite
field.
\begin{enumerate}
\item[(1)] If $\sigma=Id.$, $\delta=0$, $f=t^n-1$ and $f=gh$\\
$(b)$ gives the cyclicity condition for the code.\\
 $(c)$ we get the standard generating matrix of a cyclic code.
\item[(2)] If $\sigma=Id.$, $\delta=0$, $f=t^n-\lambda$ and $f=gh$\\
$(b)$ gives the constacyclicity condition for the code.\\
 $(c)$ we get the standard generating matrix of a constacyclic code.
\item[(3)] $f=t^n-1\in R=\mathbb{F}_q[t;\theta]$ ($\theta="Frobenius"$) and $f=gh\in R$ \\
$(b)$ gives the $\theta$-cyclicity condition for the code.\\
 $(c)$ we get the standard generating matrix of a $\theta$-cyclic code (cf. \cite{BGU}).
\item[(4)] If $\sigma=\theta$, $\delta=0$, $f=t^n-\lambda$ and $f=gh$.\\
$(b)$ gives the $\theta$-constacyclicity condition for the code.\\
 $(c)$ we get the standard generating matrix of a 
 $\theta$-constacyclic code (cf. \cite{BSU}).
\item[(5)] If $A=\mathbb{F}_q$ is a finite field and $\theta\in Aut(\mathbb F_q)$
we get the skew codes defined in several papers.  Notice that, as
mentioned in example \ref{examples of Skew polynomial rings}(4) all the Ore
extensions over a finite field are of this form.
\item[(6)] Of course, over a finite ring we can also consider Ore extensions of derivation type.
For instance, let $R$ be the Ore extension
$R:=\mathbb F_p[x]/(x^p-1)[t;\frac{d}{dx}]$, where
$\frac{d}{dx}$ denotes the usual derivation.   $f(t)=t^p-1$ is in
fact a central polynomial in $R$.
Although this polynomial is the standard one for
building cyclic codes we will see many differences in the case of cyclic
($id.,\delta$)-codes.  First let us give the form
of the (right) roots of $t^p-1$ in $A:=\mathbb F_p[x]/(x^p-1)$.   We must
find the elements $q(x)\in A$ such that $N_p(q(x))=1$.   It is easy to compute
that $N_p(q(x))=q(x)^p+\frac{d^{p-1}}{dx}(q(x))$ (or cf \cite{L2}).
Hence since $x^p=1$, we have
$N_p(q(x))=q(x)+\frac{d^{p-1}}{dx}(q(x))$.  Set
$q(x)=\sum_{i=0}^{p-1}a_ix^i$.  One can check that $N_p(q(x))=1$ if and only if
$\sum_{i=0}^{p-2}a_i=1$.  In order to be concrete, let us fix $p=5$.
In this case $x$ and $x+x^4$ are roots of $t^5-1$ and one can easily compute that
the polynomial $g(t):=t^2-2xt+x^2-1$ is in fact the least left common multiple of
$t-x$ and $t-(x+x^4)$ in $R$.  A simple reasoning involving the division
algorithm then shows that $g(t)$ is a right (and hence left, since $f(t)$ is
central) factor of $t^5-1$.  The generating matrix of the cyclic
($id.,\frac{d}{dx}$)-code corresponding to the left module
$Rg/Rf$ is given by:
$$
\begin{pmatrix}
x^2-1 & -2x & 1 & 0 & 0 \\
2x & x^2+2 & -2x & 1 & 0 \\
2 & 4x & x^2 & -2x & 1
\end{pmatrix}
$$
\end{enumerate}
\end{examples}

\bigskip

Property $(b)$ in the above theorem \ref{generic matrices} characterizes the codes that can be obtained using a factor of a monic polynomial $f$.

\begin{definition}
A monic polynomial $f(t)\in R=A[t;\sigma,\delta]$ is invariant if
$Rf(t)=f(t)R$.
\end{definition}
Let $C(t)=Rg(t)/Rf(t)$ be a module code, where $f(t),g(t)\in R$ are monic polynomials such that $Rf(t)\subseteq Rg(t)$.
Remark that if either $f(t)$ or $g(t)$ is invariant then we can write
$f(t)=h'(t)g(t)=g(t)h(t)$, for some monic polynomials $h(t),h'(t)\in R$.   When there exist monic polynomials $h(t),h'(t)\in R$ such that 
$f(t)=h'(t)g(t)=g(t)h(t)$ the cyclic module
$Rg(t)/Rf(t)$ can be described via annihilators and the code $C$ via control matrices.  We start with
the following easy lemma.  The proof is left to the reader.

\begin{lemma}
Let $f,g,h,h' \in R$ be monic polynomials such that $f=gh=h'g$.   Then
\begin{enumerate}
\item[(a)] $gR=ann_R(h'+fR)$ \, and  \,$gR/fR=\{p+fR \,|\, p\in ann_R(h'+fR)\}$.
\item[(b)] $Rg=ann_R(h+Rf)$ \, and \, $Rg/Rf=\{p+Rf \,|\, p\in ann_R(h+Rf)\}$.
\end{enumerate}
\end{lemma}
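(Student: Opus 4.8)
The plan is to treat (a) and (b) as mirror images, prove (b) in full, and then read off (a) by interchanging left and right throughout. Before touching the annihilators I would record two elementary facts. First, the two hypotheses give containments of ideals: $f=h'g$ yields $f\in Rg$, hence $Rf\subseteq Rg$, so $Rg/Rf$ is a genuine left $A$-submodule of $R/Rf$ and the set $\{p+Rf\mid p\in Rg\}$ \emph{is} $Rg/Rf$ by definition; symmetrically $f=gh$ gives $fR\subseteq gR$. Thus in each part the second displayed equality is a purely formal consequence of the first. Second, a monic polynomial in $R$ is a non-zero-divisor: if $u\neq 0$ has degree $d$ and leading coefficient $c$ and $h$ is monic of degree $m$, then $uh$ has leading coefficient $c$ and $hu$ has leading coefficient $\sigma^m(c)$, both in degree $d+m$; so $uh\neq 0$ always, and $hu\neq 0$ as soon as $\sigma$ is injective, which is the case in all situations considered here (in particular when $A$ is a field).

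For (b), I would first observe that since $R/Rf$ is a left $R$-module, $ann_R(h+Rf)=\{p\in R\mid ph\in Rf\}$ is a left ideal. The inclusion $Rg\subseteq ann_R(h+Rf)$ is immediate: for $p=qg$ one has $ph=q(gh)=qf\in Rf$. For the reverse inclusion, take $p$ with $ph\in Rf$; since $Rf=R(gh)=(Rg)h$, write $ph=(rg)h$ for some $r\in R$, so $(p-rg)h=0$, and right-regularity of the monic polynomial $h$ forces $p=rg\in Rg$. This gives $Rg=ann_R(h+Rf)$, and the description of $Rg/Rf$ then follows from the first preliminary remark together with $Rf\subseteq Rg$.

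Part (a) is obtained by running the same argument on the other side: $ann_R(h'+fR)=\{p\in R\mid h'p\in fR\}$ is a right ideal, $gR\subseteq ann_R(h'+fR)$ because $h'(gr)=(h'g)r=fr\in fR$, and conversely $h'p\in fR=h'(gR)$ gives $h'(p-gr)=0$ for some $r$, whence left-regularity of the monic $h'$ yields $p=gr\in gR$; the quotient description follows from $fR\subseteq gR$. I do not expect a genuinely hard step: the lemma is elementary (the paper itself leaves it to the reader). The only point meriting care is the monic-polynomial cancellation used in the reverse inclusions — right cancellation is automatic from the degree formula, while the left cancellation needed in (a) uses injectivity of $\sigma$ — and, beyond that, keeping the sidedness bookkeeping straight: which quotient is a left versus a right module, and hence whether each annihilator is a left or a right ideal.
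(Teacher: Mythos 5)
Your proof is correct, and since the paper explicitly leaves this lemma to the reader there is no authorial argument to compare it against; the route you take (monicity of $f=gh=h'g$ giving $Rf\subseteq Rg$ and $fR\subseteq gR$, plus cancellation of the monic cofactor) is the natural and intended one. Your side remark that left cancellation by a monic polynomial in part (a) uses injectivity of $\sigma$ is a genuine and correctly flagged subtlety, harmless in the paper's applications where $\sigma$ is an automorphism of a finite field.
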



\begin{theorem}
\label{the sigma delta code as annihilator}
Let $f,g,h,h'\in R$ be monic polynomials such that $f=gh=h'g$ and let $C$ denote the code corresponding
to the cyclic module $Rg/Rf$.  Then the following statements are equivalent:
\begin{enumerate}
\item[(i)] $(c_0,\dots,c_{n-1})\in C$,
\item[(ii)] $(\sum_{i=0}^{n-1}c_it^i)h(t)\in Rf$,
\item[(iii)] $\sum_{i=0}^{n-1}c_iT_f^i(\underline{h})=\underline{0}$,
\item[(iv)] $\sum_{j=0}^{n-1}(\sum_{i=j}^{n-1}c_if_j^i(\underline{h}))N_j(C_f)=\underline{0}$.
\end{enumerate}
\end{theorem}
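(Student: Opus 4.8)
The plan is to establish the cycle $(i)\Leftrightarrow(ii)\Leftrightarrow(iii)$ first, using the machinery already developed, and then to obtain $(iii)\Leftrightarrow(iv)$ as a purely computational restatement of $(iii)$ via the formula for powers of a pseudo-linear transformation recorded in Example~\ref{examples of PLT}(4).

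First I would prove $(i)\Leftrightarrow(ii)$. By definition of the code $C$ and the bijection $\varphi$ of Proposition~\ref{bijection between R/Rf and A^n}, we have $(c_0,\dots,c_{n-1})\in C$ if and only if $c(t):=\sum_{i=0}^{n-1}c_it^i$ satisfies $c(t)+Rf\in Rg/Rf$, i.e.\ $c(t)\in Rg$. By the preceding lemma, $Rg=\mathrm{ann}_R(h+Rf)$, so $c(t)\in Rg$ if and only if $c(t)(h(t)+Rf)=0$ in $R/Rf$, that is, $c(t)h(t)\in Rf$. This gives $(i)\Leftrightarrow(ii)$. For $(ii)\Leftrightarrow(iii)$ I would invoke Lemma~\ref{f divise à droite pq} with $p(t)=c(t)$ and $q(t)=h(t)$: since $f$ is monic of degree $n$ and $\deg(h)=n-r<n$ (where $\deg g=r$, so $\deg h=n-r$), that lemma says $c(t)h(t)\in Rf$ if and only if $c(T_f)(\underline h)=\underline 0$, and since $c(T_f)=\sum_{i=0}^{n-1}c_iT_f^i$ by definition of the evaluation map, this is exactly $(iii)$.

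For $(iii)\Leftrightarrow(iv)$ I would simply expand $(iii)$ by applying each $T_f^i$ to the vector $\underline h$ using the identity $T_f^i(\underline h)=\sum_{j=0}^{i}f^i_j(\underline h)\,T_f^j(1,0,\dots,0)$ from Example~\ref{examples of PLT}(4) (here $f^i_j$ is the sum of all words in $\sigma,\delta$ with $i-j$ letters $\delta$ and $j$ letters $\sigma$, applied componentwise to $\underline h$). Substituting into $\sum_{i=0}^{n-1}c_iT_f^i(\underline h)$ and interchanging the order of summation over $i$ and $j$ yields $\sum_{j=0}^{n-1}\bigl(\sum_{i=j}^{n-1}c_i f^i_j(\underline h)\bigr)T_f^j(1,0,\dots,0)=\underline 0$. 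Finally, by Proposition~\ref{p(T_a)(1)=p(a)} (and its generalization to $T_f$ via $N_j(C_f)$), the vector $T_f^j(1,0,\dots,0)$ equals $N_j(C_f)$ acting appropriately; more precisely $T_f^j(1,0,\dots,0)=(1,0,\dots,0)$ under the identification used throughout is the $j$-th ``power-remainder'' vector, which is $\underline{N_j(C_f)}$ in the notation of the statement, giving exactly $(iv)$.

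The main obstacle I anticipate is bookkeeping rather than conceptual: getting the index ranges and the definition of $N_j(C_f)$ exactly right in the passage from $(iii)$ to $(iv)$. One must be careful that $T_f^j$ applied to the standard first basis vector indeed coincides with the matrix expression $N_j(C_f)$ (a companion-matrix analogue of part (1) of Proposition~\ref{p(T_a)(1)=p(a)}), and that the componentwise action of the word-operators $f^i_j$ on $\underline h$ matches the scalars $f^i_j(\underline h)$ appearing in $(iv)$; once the recursion $N_{j+1}(C_f)=\sigma(N_j(C_f))C_f+\delta(N_j(C_f))$ is in hand, the rest is a routine interchange of finite sums. I would also double-check the degree hypothesis needed to apply Lemma~\ref{f divise à droite pq}: $\deg(h)=n-r\le n-1$ holds as soon as $r\ge 1$, i.e.\ $g$ is a nonconstant monic factor, which is the only interesting case.
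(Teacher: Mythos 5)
Your argument is the paper's own, step for step: (i)$\Leftrightarrow$(ii) via the annihilator description $Rg=\mathrm{ann}_R(h+Rf)$, (ii)$\Leftrightarrow$(iii) via Lemma~\ref{f divise à droite pq}, and (iii)$\Leftrightarrow$(iv) by substituting $T_f^i(\underline h)=\sum_{j}f^i_j(\underline h)N_j(C_f)$ and interchanging the two finite sums (the paper does not reprove this identity but cites it from \cite{L}). The only slip is your intermediate identity $T_f^i(\underline h)=\sum_{j\le i}f^i_j(\underline h)\,T_f^j(1,0,\dots,0)$, which is not what Example~\ref{examples of PLT}(4) gives (there the coefficient must be a scalar $\alpha$, whereas here $f^i_j(\underline h)$ is a vector so the written product is undefined, and $T_f^j(1,0,\dots,0)$ is only the first \emph{row} of the matrix $N_j(C_f)$); you repair this yourself by passing to the matrix version with the recursion $N_{j+1}(C_f)=\sigma(N_j(C_f))C_f+\delta(N_j(C_f))$, which is precisely the formula the paper imports from \cite{L}, so the overall route is the same.
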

\begin{proof}
\noindent (i) $\Leftrightarrow$ (ii) This is just the definition of $ann_{R}(h+Rf)$.

\noindent (ii) $\Leftrightarrow$ (iii) This comes from Lemma \ref{f divise à droite pq}.

\noindent (iii) $\Leftrightarrow$ (iv) It was mentioned in \ref{examples of PLT}  (4) that, for $\alpha\in A$ and $v\in V$, we have 
$T^n(\alpha v)=\sum_{i=0}^nf^n_i(\alpha)T^i(v)$.  Similarly we have,
for any $i\ge 0$
$T_f^i(v)=\sum f^i_j(v)N_j(C_f)$.  This formula was proved in \cite{L}.
\end{proof}

In view of the above it seems natural to set the following definition.

\begin{definition}
\label{definition of Control matrix}
For a left (resp. right) linear code $C\subseteq A^n$, we say that a matrix $H$ is a control matrix if
 $C=lann (H)$ (resp. $C=rann (H)$).
\end{definition}

From the above theorem \ref{the sigma delta code as annihilator}(iii) we immediately get the following corollary.

\begin{corollary}
\label{a control matrix given by T_f}
For a code $C$ determined by the left $R$-module $Rg/Rf$ such that there exist
monic polynomials $h,h'\in R$ with $f=gh=h'g$ the matrix $H$ whose $i^{th}$ row is $T_f^{i-1}(\underline{h})$, for
$1\le i \le \deg (f) $ is a
control matrix.
\end{corollary}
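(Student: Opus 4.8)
The plan is to read the statement as an immediate consequence of Theorem~\ref{the sigma delta code as annihilator}, specifically the equivalence (i)$\Leftrightarrow$(iii). Recall that, by Definition~\ref{definition of Control matrix}, to say that $H$ is a control matrix for the left linear code $C$ means precisely that $C = \mathrm{lann}(H)$, i.e. a vector $\underline{c}=(c_0,\dots,c_{n-1})\in A^n$ lies in $C$ if and only if $\underline{c}\,H = \underline{0}$. So the whole task reduces to checking that the matrix $H$ described in the corollary (whose $i$-th row, for $1\le i\le n=\deg f$, is the $n$-tuple $T_f^{\,i-1}(\underline{h})$) has exactly this property.

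First I would spell out the matrix product $\underline{c}\,H$. Since the $i$-th row of $H$ is $T_f^{\,i-1}(\underline{h})$, the product $\underline{c}\,H$ is by definition $\sum_{i=1}^{n} c_{i-1} T_f^{\,i-1}(\underline{h}) = \sum_{i=0}^{n-1} c_i\, T_f^{i}(\underline{h})$. Here one should note that each $T_f^{i}(\underline{h})$ is an element of $A^n$ and the $c_i\in A$ act on the left, which is consistent with $C$ being a \emph{left} linear code and with the left $A$-module structure carried on $A^n$; this is why the definition singles out $\mathrm{lann}$ rather than $\mathrm{rann}$. Thus $\underline{c}\,H = \underline{0}$ is literally the condition (iii) of Theorem~\ref{the sigma delta code as annihilator}.

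Then I would invoke Theorem~\ref{the sigma delta code as annihilator}: under the running hypothesis that there exist monic $h,h'\in R$ with $f=gh=h'g$, that theorem gives $(c_0,\dots,c_{n-1})\in C \iff \sum_{i=0}^{n-1} c_i T_f^{i}(\underline{h}) = \underline{0}$, i.e. $\iff \underline{c}\,H=\underline{0}$. Hence $C = \mathrm{lann}(H)$, which is exactly the assertion that $H$ is a control matrix for $C$ in the sense of Definition~\ref{definition of Control matrix}. It is worth remarking explicitly that the hypothesis ``there exist monic $h,h'$ with $f=gh=h'g$'' is exactly what is needed to apply the theorem, and that $h$ (the right cofactor) is the one whose coordinate vector $\underline{h}$ appears in the rows.

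There is essentially no obstacle here: the corollary is a direct restatement of part (iii) of the preceding theorem in the language of Definition~\ref{definition of Control matrix}. The only thing requiring a moment's care is the bookkeeping of indices (rows indexed $1\le i\le \deg f$ versus the exponents $0\le i\le n-1$ appearing in the theorem) and the observation that the left scalar multiplication by the $c_i$ is the correct module operation, so that the matrix identity $\underline{c}\,H=\underline{0}$ genuinely coincides with condition (iii) rather than with its ``transpose''. Once these conventions are aligned, the proof is a single line.
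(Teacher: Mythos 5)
Your proposal is correct and is exactly the route the paper takes: the corollary is presented as an immediate consequence of the equivalence (i)$\Leftrightarrow$(iii) in Theorem~\ref{the sigma delta code as annihilator}, with $\underline{c}\,H=\sum_{i=0}^{n-1}c_iT_f^i(\underline{h})$ translating condition (iii) into $C=\mathrm{lann}(H)$. Your added care about row indexing and the left-module convention only makes explicit what the paper leaves implicit.
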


We show hereunder that the above Theorem \ref{the sigma delta code as annihilator}  and
Corollary \ref{a control matrix given by T_f} give back the
control matrix of classical cyclic and skew cyclic codes.

\begin{examples}
\begin{enumerate}
\item[(1)] Let $f(t)=t^n-1\in R=F[t]$, where $F$ is a (finite) field.
and let $g(t),h(t)\in R$ be such that $t^n-1=g(t)h(t)=h(t)g(t)$.
We write $h(t)=\sum_{i=0}^k h_it^i$.  For
$\underline{v}=(v_0,\dots,v_{n-1})\in k^n$, the action of $T_f^i$ is given by
$T_f^i(\underline{v})=(v_0,\dots,v_{n-1})C^i$, where $C$ is the companion matrix
associated to the polynomial $t^n-1$.  Theorem \ref{the sigma delta code as annihilator}
shows that a control matrix associated to the code $C$ corresponding to
$Rg/Rf$ is the following:
$$
\begin{pmatrix}
h_0 & h_1 & \dots & h_k & 0 & 0 & 0 \\
0 & h_0 & \dots & h_{k-1} & h_k & 0 & 0 \\
\vdots & \dots &  & \dots & \dots & \dots & 0 \\
h_k & 0 & \dots & h_0 & \dots & \dots & h_{k-1} \\
\vdots & \vdots & • & 0 &  & • & • \\
 & • & • & \vdots & • & h_0 & • \\
• & • & • & 0 & • & • & h_0
\end{pmatrix}
$$
Of course, the dimension of $Rg/Rf$ is equal to $k$ and hence the
rank of the control matrix must be $n-k$.  In other words, any set
of $n-k$ independent columns of the above matrix will have $C$ as
its (left) kernel. Since the last $n-k$ columns are in echelon form,
they are independent and hence these last columns give as well a
control matrix, say $H$.  Since $F$ is commutative we can see the
code as a right linear code and use the standard transposition to
get the control matrix of this "right" linear code.  A control
matrix for $C$ considered as a right linear code is thus just the
transpose of $H$.  This is the standard control matrix.
\item[(2)] In the same way as (1) we can consider the $\theta$-cyclic codes and obtain their
control matrices retrieving formulas proved elsewhere (e.g.
\cite{BGU}). In fact we more generally consider the following
situation Let $A$ be a ring and $\sigma$ an automorphism of $A$
(classically $A$ is a finite field and $\sigma$ is the Frobenius
automorphism). Assume $t^n-1=gh=h'g$, where $g,h,h'\in R$ are monic
polynomials. Let us write $h(t)=\sum_{i=0}^k h_it^i$, with $h_k=1$.
The pseudo-linear transformation defined by $f(t)=t^n-1$ is the map
$T_f: A^n\longrightarrow A^n$ defined by
$T_f(\underline{v})=\sigma(\underline{v})C$, where $C$ is the
companion matrix associated to $t^n-1$ and $\underline{v}\in A^n$.
It is easy to check that the following matrix $H$ is a control
matrix for the code $C$ determined by the module $Rg/Rf$:

$$
H=\begin{pmatrix}
h_0 & h_1 & \dots & h_k & 0 & 0 & 0 \\
0 & \sigma(h_0) & \dots & \sigma(h_{k-1}) & \sigma(h_k) & 0 & 0 \\
0 & 0 & \sigma^2(h_0) & \dots & \dots &  \dots   & \dots  \\
\vdots & \dots &  & \dots & \dots & \dots & 0 \\
0 & 0 & \dots & \dots& \dots & \dots & \sigma^{n-k}(h_{k}) \\
\vdots & \vdots & • & 0 &  & • & • \\
 & • & • & \vdots & • & h_0 & • \\
• & • & • & 0 & • & • & \dots
\end{pmatrix}
$$
So the last $n-k$ columns are in echelon form and hence linearly independent.  The dimension of the code being
equal to $k$, in good cases (e.g. if the ring is a field),  this means that they define
a control matrix as well.  The transpose of these last columns is exactly the control matrix obtained
by other authors in the case when $A$ is a commutative field.

\item[(3)] Let us now give an example of a cyclic code using a derivation.
Let $A$ be a ring and $\delta$ be a (usual) derivation on $A$.  For $a\in A$ we consider
the polynomial $f(t):=(t^2-a)^2\in A[t;\delta]$ and put $g=h=t^2-a$.
We easily compute $f(t)=t^4-2at^2-2\delta(a)t-\delta^2(a)+a^2$.
The generic matrix $G$ and control matrix $H$ are equal:
$$
H= \begin{pmatrix}
-a & 0 & 1 & 0 \\
-\delta(a) & -a & 0 & 1 \\
-a^2 & 0 & a & 0 \\
a\delta(a)-\delta(a)a & -a^2 & \delta(a) & a
\end{pmatrix}
$$
One can check that $\underline{g}H=(-a,0,1,0)H=(0,0,0,0)$.  Set $H_1,H_2,H_3,H_4$ to represent the different
columns of $H$,  then $H_1+H_3(-a)+H_4\delta(a)=0\in A^4$ and $H_2+aH_4=0\in A^4$.  Let $H'$ be the $4\times 2$
matrix $H'=(H_3,H_4)$.  We easily get that $lann (H')=lann (H)=C$.  This shows that $H'$ is a control matrix of the code $C$.
\item[(4)] We now compute a control matrix of the  cyclic code given in the above example \ref{examples of generic
matrices} (4).  We have $R:=\mathbb F_5[x]/(x^5-1)[t;\frac{d}{dx}]$, and $f(t)=t^5-1$.
This last polynomial is central and can be factorized as
$f(t)=g(t)h(t)=h(t)g(t)$ where $g(t):=t^2-2xt+x^2-1$ and
$h(t)= t^3 + 2xt^2 + (3x^2+2)t + (4x^3+3x)$.
The code we are considering corresponds to the module $Rg(t)/(t^5-1)$.  The
control matrix is given by the matrix $H\in M_5(\mathbb F_5)$ whose rows are given by
$T_f^i(\underline{h})$, $0\le i \le 4$.  The first row is thus $\underline{h}$
the second row is $\underline{h}C_f + \frac{d}{dx}(\underline{h})$.  Here $C_f$ is the
companion matrix of $t^5-1$ and acts as cyclic permutation.   Hence we get
$$
H=\begin{pmatrix}
4x^3+3x & 3x^2+2 & 2x & 1 & 0 \\
2x^2+3 & 4x^3+4 & 3x^2+4 & 2x & 1 \\
4x+1 & 4x^2+2 & 4x^3 & 3x^2+1 & 2x \\
2x+4 & 2x+1 & x^2+2 & 4x^3+6x & 3x^2+3 \\
3x^2 & 2x+1 & 4x+1 & 3x^2+3 & 4x^3+2x
\end{pmatrix}
$$
\end{enumerate}
\end{examples}

\section{($\si,\de$)-$W$-codes}

\vspace{4mm}

We will consider cyclic ($f(t),\sigma,\delta$)-codes corresponding to left cyclic 
modules of the form $Rg(t)/Rf(t)$ where $f(t),g(t)\in R=A[t;\sigma,\delta]$ are monic polynomials but $g(t)$ is a Wedderburn polynomial as explained in the following definitions.
\begin{definitions}
\begin{enumerate}
\item[(a)] A monic polynomial $g(t)\in R=\Ore$ of degree $r$ is a Wedderburn
polynomial if there exist elements $a_1,\dots,a_r\in A$ such that
$Rg(t)=\bigcap_{i=0}^rR(t-a_i)$.  We will refer to these polynomials as $W$-
polynomials.
\item[(b)]  The $n\times r$
generalized Vandermonde matrix defined by $a_1,\dots,a_r$ is given by:
$$
V_n(a_1,\dots,a_r)=
    \begin{pmatrix}
1 & 1 & \dots & 1 \\
a_1 & a_2 & \dots & a_r \\
\dots & \dots & \dots & \dots \\
N_{n-1}(a_1) & N_{n-1}(a_2) & \dots & N_{n-1}(a_r)
\end{pmatrix}.
$$
Recall that, for $0\le i \le n-1,\; N_i(a)$ is the evaluation of $t^i$ at $a\in A$ (cf. Definitions \ref{definitions of polynomial maps and pseudo linear maps}).
\item[(c)] A ($\si,\de$)-$W$-code $C\subseteq A^n$ is the set of $n$-tuples in 
$A^n$ corresponding to a cyclic left $R$-module of the form 
$Rg(t)/Rf(t)$ such that $g(t)$ is a $W$-polynomial.
\end{enumerate}
\end{definitions}
Wedderburn polynomials have been studied in details in \cite{LL} and \cite{LLO}.
 The generic matrix corresponding to a ($\si,\de$)-$W$-code is the standard one 
described in \ref{generic matrices}.  But an easy control matrix can be obtained as described in the 
next proposition.

Let us first state a general lemma.

\begin{lemma}
Let $f(t),g(t),h(t)\in R=\Ore$ be monic polynomials such that $f(t)=h(t)g(t)$.
Then $Rg(t)/Rf(t)=\{p(t)g(t)+Rf(t)\,|\,\deg p(t)<\deg h(t)\}$. 
\end{lemma}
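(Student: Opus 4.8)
The plan is to show the two sets coincide by a dimension‑counting argument combined with an explicit description of a basis. First I would observe that the right‑hand side $S:=\{p(t)g(t)+Rf(t)\,|\,\deg p(t)<\deg h(t)\}$ is certainly contained in $Rg(t)/Rf(t)$: each $p(t)g(t)$ lies in $Rg(t)$, so its image modulo $Rf(t)$ lies in the submodule $Rg(t)/Rf(t)$ of $R/Rf(t)$. The reverse inclusion is where the real content lies, and here I would use the euclidean right division by $h(t)$, which is available since $h(t)$ is monic. Given any $q(t)g(t)+Rf(t)\in Rg(t)/Rf(t)$ with $q(t)\in R$ arbitrary, write $q(t)=a(t)h(t)+p(t)$ with $\deg p(t)<\deg h(t)$. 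Then $q(t)g(t)=a(t)h(t)g(t)+p(t)g(t)=a(t)f(t)+p(t)g(t)$, so $q(t)g(t)+Rf(t)=p(t)g(t)+Rf(t)\in S$. This already proves set equality directly, without any dimension count.

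Thus the core of the proof is just the identity $f(t)=h(t)g(t)$ together with monic right division by $h(t)$; I would present exactly the computation above. For completeness I would also remark that the representation is unique: if $p_1(t)g(t)\equiv p_2(t)g(t)\pmod{Rf(t)}$ with both $p_i$ of degree $<\deg h(t)$, then $(p_1(t)-p_2(t))g(t)\in Rf(t)=Rh(t)g(t)$, and since $g(t)$ is monic we may cancel it on the right (right division by a monic polynomial has unique quotient and remainder, so $Rh(t)g(t)\cap Rg(t)$ pulls back cleanly) to get $p_1(t)-p_2(t)\in Rh(t)$, forcing $p_1=p_2$ by degrees. This shows the listed elements form a free left $A$‑basis of $Rg(t)/Rf(t)$ of rank $\deg h(t)=\deg f(t)-\deg g(t)$, consistent with Theorem \ref{generic matrices}(a).

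The only point requiring a little care — and the one I would flag as the main obstacle — is the cancellation of the monic polynomial $g(t)$ on the right in a noncommutative setting: one must know that $u(t)g(t)\in Rf(t)=Rh(t)g(t)$ implies $u(t)\in Rh(t)$. This follows because right multiplication by the monic polynomial $g(t)$ is an injective left $A$‑module (indeed left $R$‑module, up to the obvious identification) map and sends $Rh(t)$ onto $Rh(t)g(t)=Rf(t)$; alternatively, it is an instance of the fact that in $R=\Ore$ a monic polynomial is not a right zero‑divisor and that $R$ admits right division by monic polynomials. Since the lemma as stated only asserts the set equality, the division argument of the first paragraph suffices and I would keep the uniqueness remark brief.
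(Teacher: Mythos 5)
Your argument is correct and coincides with the paper's own proof: both reduce an arbitrary element $m(t)g(t)+Rf(t)$ of $Rg(t)/Rf(t)$ by right division of $m(t)$ by the monic polynomial $h(t)$ and use $f=hg$ to absorb the quotient term into $Rf(t)$. The additional uniqueness/basis remark is a correct (and harmless) bonus beyond what the lemma asserts.
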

\begin{proof}
This is obvious: if $m(t)g(t)+Rf(t) \in Rg(t)/Rf(t)$, dividing by the monic 
polynomial $h(t)$, we can write $m(t)=q(t)h(t)+p(t)$ with $\deg p(t)<\deg h(t)$
and we have $m(t)g(t)+Rf(t)=p(t)g(t)+Rf(t)$.
\end{proof}
\vspace{4mm}

\begin{proposition}
\label{generating and control matrix for a W-code}
Let $f(t),g(t)\in R=\Ore$ be monic polynomials of degree $n$ and $r$ 
respectively.  Suppose that $g(t)$ is a Wedderburn polynomial with $f(t)\in 
Rg(t)$ and let $C$ be 
the ($\sigma, \delta$)-$W$-code of length $n$ corresponding to the left
cyclic $R$-module $Rg(t)/Rf(t)$.  Let $a_1,\dots,a_r\in A$ be such that 
$Rg(t)=\bigcap_{i=0}^rR(t-a_i)$.
Then $(c_0,c_1,\dots,c_{n-1}) \in C$
if and only if $(c_0,c_1,\dots,c_{n-1})
V_{n}(a_1,\dots,a_r)=(0,\dots,0)$.
\end{proposition}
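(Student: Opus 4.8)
The plan is to combine two facts already available in the paper: the characterization of membership in the code $C$ via right-divisibility (Theorem~\ref{the sigma delta code as annihilator}, or more directly Lemma~\ref{f divise à droite pq}), and the defining property of a Wedderburn polynomial, namely that $Rg(t)=\bigcap_{i=1}^r R(t-a_i)$. The key observation is that for a codeword vector $\underline{c}=(c_0,\dots,c_{n-1})$, the product $\underline{c}\,V_n(a_1,\dots,a_r)$ has $j$-th entry $\sum_{i=0}^{n-1}c_i N_i(a_j)$, which by Definitions~\ref{definitions of polynomial maps and pseudo linear maps}(b) is exactly the value of the polynomial map $c(t):=\sum_{i=0}^{n-1}c_it^i$ evaluated at $a_j$, i.e. $c(a_j)$. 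So the matrix equation $\underline{c}\,V_n(a_1,\dots,a_r)=\underline{0}$ says precisely that $c(a_j)=0$ for all $j=1,\dots,r$, and by the interpretation of the polynomial map following Proposition~\ref{p(T_a)(1)=p(a)} this means $c(t)\in R(t-a_j)$ for each $j$.

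First I would recall, from the definition of a cyclic $(f,\sigma,\delta)$-code and Proposition~\ref{bijection between R/Rf and A^n}, that $(c_0,\dots,c_{n-1})\in C$ if and only if $c(t)=\sum_{i=0}^{n-1}c_it^i$ satisfies $c(t)+Rf(t)\in Rg(t)/Rf(t)$, equivalently $c(t)\in Rg(t)$ (using that $\deg c<n=\deg f$ so $c(t)$ is a genuine coset representative lying in $Rg(t)$ exactly when its class does). Next I would invoke the Wedderburn condition: $c(t)\in Rg(t)=\bigcap_{i=1}^r R(t-a_i)$ if and only if $c(t)\in R(t-a_i)$ for every $i$. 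Then, by the remark just after Proposition~\ref{p(T_a)(1)=p(a)} (that $p(t)\in R(t-a)\iff p(T_a)(1)=0\iff p(a)=0$), this is equivalent to $c(a_i)=0$ for all $i=1,\dots,r$. Finally I would unwind the identity $c(a_i)=\sum_{j=0}^{n-1}c_j N_j(a_i)$ and recognize the collection of these $r$ scalar equations, for $i=1,\dots,r$, as the single matrix identity $(c_0,\dots,c_{n-1})\,V_n(a_1,\dots,a_r)=(0,\dots,0)$, by inspecting the columns of the generalized Vandermonde matrix.

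The steps here are all essentially bookkeeping, so I do not expect a serious obstacle; the one point needing a little care is the very first equivalence, where I must make sure that passing from the coset $c(t)+Rf(t)$ to the element $c(t)$ is legitimate. This is fine because the code is by definition read off in the basis $\{1,t,\dots,t^{n-1}\}$, so $C$ consists exactly of the coordinate vectors of those cosets in $Rg(t)/Rf(t)$, and since $g(t)$ right-divides $f(t)$ we have $Rf(t)\subseteq Rg(t)$, so a polynomial of degree $<n$ lies in $Rg(t)$ iff its class modulo $Rf(t)$ lies in $Rg(t)/Rf(t)$; this is also implicitly what Proposition~\ref{bijection between R/Rf and A^n} and the surrounding discussion give. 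A secondary point is that the definition writes $\bigcap_{i=0}^r$ while only $a_1,\dots,a_r$ are named; I would simply read this as $\bigcap_{i=1}^r R(t-a_i)$, consistent with degree count ($\deg g=r$ forces exactly $r$ such factors to intersect to something of degree $r$). With these caveats noted, the chain of equivalences closes and the proposition follows.
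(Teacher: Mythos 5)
Your argument is correct and follows essentially the same route as the paper's own proof: identify membership in $C$ with $c(t)\in Rg(t)$, use the Wedderburn property $Rg(t)=\bigcap_i R(t-a_i)$ together with the remainder criterion $p(t)\in R(t-a)\iff p(a)=0$, and recognize the evaluations $c(a_i)=\sum_j c_jN_j(a_i)$ as the entries of $\underline{c}\,V_n(a_1,\dots,a_r)$. Your additional care about passing from the coset $c(t)+Rf(t)$ to the polynomial $c(t)$ itself only makes explicit a step the paper leaves implicit.
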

\begin{proof}

\noindent Let us remark that a polynomial $h(t)=\sum_{i=0}^{n-1}h_it^i\in Rg(t)$ if and only if
 $h(a_i)=0$ for all $1\le i \le r$.  Since $(h(a_0),\dots,h(a_r))=(h_0,\dots, h_{n-1})V_n(a_1,\dots,a_r)$,
 we have $h(t)\in Rg(t)$ if and only if $(h_0,\dots, h_{n-1})V_n(a_1,\dots,a_r)=(0,\dots,0)$.
 This yields the thesis.
\end{proof}

The proposition above amounts to say  that a control matrix is given by the Vandermonde matrix $V_n(a_1,\dots,a_r)$.
The Vandermonde matrix determined by Wedderburn
polynomial $g(t)$ can thus  be used as a control matrix for                                                                                                                                                                                                                                                                                                                                                                                                                                                                                                                                                                                                                                                                                                                                                                                                                                                                                                                                                         the ($\sigma,\delta$)-W-code $C$.

\begin{remarks}
\begin{enumerate}
\item[(1)] The Vandermonde matrices  are strongly related to Wronskian matrices
and to Noncommutative symmetric functions.  In an ($\sigma,\delta)$-setting
information can be found in \cite{DL}.  In particular, in this reference an 
axiomatic method is developed in order to compute
the least left common multiple of polynomials of the form $t-a_1,\dots,t-a_n$.
\item[(2)] In general the existence of a least left common multiple of linear
polynomials of the form $t-a_1,\dots, t-a_r$ is not guaranteed (for a general 
ring $A$).  The exact necessary and sufficient conditions 
for the existence of a LLCM of such
polynomials is given in Theorem 7.2 in \cite{DL}.
\item[(3)] If $A$ is a division ring the existence of LLCM of $t-a_1,\dots, t-a_r$ is clear but its degree can be less then $r$ even if the elements $a_1,\dots,a_r$ are all distinct.  For several
necessary and sufficient conditions for this the degree to be equal to $r$ we
refer the reader to \cite{LL} and \cite{LLO}.   In the case of a finite field
$\mathbb F_q$ such a condition can be find in \cite{MS}, pp 117-119.
\end{enumerate}
\end{remarks}

   The next theorem gives a characterization of the W-polynomials in  $R=\mathbb F_q[t;\theta]$.

\begin{theorem}
Let $p$ be a prime number and $n\in \mathbb N$.  Let also $R$ be the Ore
extension $R=\mathbb F_q[t;\theta]$, where $q=p^n$ and $\theta$ is the Frobenius
map.  We extend $\theta$ to $R$ by defining $\theta(t)=t$. Then:
\begin{enumerate}
\item[(a)] The polynomial $G(t)=t^{(p-1)n+1}-t$ (resp. $G_0(t)=t^{(p-1)n}-1$ ) is the least left common multiple of all the linear polynomials $t-a$,
$a\in \mathbb F_q$ (resp. $0\ne a\in \mathbb F_q$).
\item[(b)] Let $G(t)$ and $G_0(t)$ be as in the statement (a) above.  For any $h(t)\in \mathbb F_q[t;\theta]$, we have $G(t)h(t)=\theta(h(t))G(t)$.  The polynomial $G_0(t)=t^{(p-1)n}-1$ belongs to the center of $R$.
\item[(c)] Let $G(t)$ and $G_0(t)$ be as in the statement (a) above.  If $g(t),h(t)\in \mathbb F_q[t;\theta]$ are monic polynomials
such that $h(t)g(t)=G(t)$, then $\theta(g(t))h(t)=G(t)$.  Similarly if
$h(t)g(t)=G_0(t)$ then $g(t)h(t)=G_0(t)$.
\item[(d)] The W-polynomials are exactly the right (and left) factors of the
polynomial $G(t)$ mentioned in statement (a).
\end{enumerate}
\end{theorem}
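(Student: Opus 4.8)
I would prove (b) and (c) first as formal consequences of two easy facts, then establish (a) via generalized Vandermonde matrices, and deduce (d) from (a)--(c). Throughout write $R=\mathbb F_q[t;\theta]$; since $\delta=0$ one has $N_i(a)=\theta^{i-1}(a)\cdots\theta(a)\,a=a^{e_i}$ with $e_i:=1+p+\cdots+p^{i-1}=(p^i-1)/(p-1)$, the ring $R$ is a left and right Euclidean domain (so least common left multiples exist and one may cancel nonzero factors on either side), and $\theta$ is an automorphism of $R$ of order $n$ (it fixes $\mathbb F_p$ and $t^n$).

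\textbf{Parts (b) and (c).} As $\theta$ has order $n$, the subring $\mathbb F_p[t^n]$ is central in $R$, so $G_0=(t^n)^{p-1}-1$ lies in it; this gives the last assertion of (b). For the first assertion, write $G=tG_0=G_0t$ and use $t\,h(t)=\theta(h(t))\,t$ (immediate from $\theta(t)=t$): then $Gh=G_0th=G_0\theta(h)t=\theta(h)G_0t=\theta(h)G$. For (c): from $hg=G$ and (b) applied with $g$ in the role of $h$, $Gg=\theta(g)G=\theta(g)(hg)=(\theta(g)h)g$, and cancelling $g$ on the right gives $G=\theta(g)h$. Likewise, from $hg=G_0$ and the centrality of $G_0$, $gG_0=g(hg)=(gh)g$ while $gG_0=G_0g=(hg)g$, so $(gh)g=(hg)g$ and cancellation yields $gh=hg=G_0$.

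\textbf{Part (a).} The arithmetic heart is the equivalence, for $i>j\ge0$: $e_i\equiv e_j\pmod{p^n-1}\iff(p-1)n\mid(i-j)$. Since $e_i-e_j=p^je_{i-j}$ and $\gcd(p^j,p^n-1)=1$, this reduces to showing $(p^n-1)\mid e_m\iff(p-1)n\mid m$, which is a short congruence computation (reducing $p^k$ to $p^{k\bmod n}$ one gets $e_{n\ell}\equiv\ell\,e_n\pmod{p^n-1}$, and $(p^n-1)\mid\ell e_n\iff(p-1)\mid\ell$, with the non-multiple cases handled by the size estimate $0\le q'e_n+e_s<(p-1)e_n=p^n-1$). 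Taking $m=(p-1)n$ gives $e_{(p-1)n}\equiv0$, hence $N_{(p-1)n}(a)=1$ for $a\in\mathbb F_q$, $a\ne0$, and $N_{(p-1)n}(0)=0$; thus the right roots of $G_0=t^{(p-1)n}-1$ in $\mathbb F_q$ are exactly the nonzero elements, each $t-a$ ($a\ne0$) divides $G_0$ on the right, and therefore $L:=\mathrm{lcm}\{t-a:a\in\mathbb F_q,\ a\ne0\}$ divides $G_0$ on the right. To see $\deg L=(p-1)n$, observe that the evaluation map $R\to\prod_{0\ne a\in\mathbb F_q}\mathbb F_q$, $p\mapsto(p(a))_a$, is left $\mathbb F_q$-linear with kernel $\bigcap_a R(t-a)=RL$; hence $\deg L=\dim_{\mathbb F_q}R/RL$ is the $\mathbb F_q$-rank of the generalized Vandermonde matrix with $i$-th row $(N_i(a))_a=(a^{e_i})_a$, $i\ge0$. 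Since the power functions $a\mapsto a^e$ on $\mathbb F_q\setminus\{0\}$ are $\mathbb F_q$-linearly independent for pairwise distinct $e$ modulo $q-1$, this rank equals $|\{e_i\bmod(p^n-1):i\ge0\}|$, which by the claim above is $(p-1)n$. So $L=G_0$ (both monic of the same degree, $L$ a right divisor of $G_0$). Finally $\mathrm{lcm}\{t-a:a\in\mathbb F_q\}=\mathrm{lcm}(t,L)=\mathrm{lcm}(t,G_0)$; as $G_0$ has nonzero constant term, $\gcd_r(t,G_0)=1$, so this lcm has degree $1+(p-1)n$, and $tG_0=G_0t$ is a common left multiple of exactly that degree, hence equals it: $\mathrm{lcm}\{t-a:a\in\mathbb F_q\}=tG_0=G$.

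\textbf{Part (d), and the main difficulty.} If $g$ is a $W$-polynomial, then $Rg=\bigcap_iR(t-a_i)$ with $a_i\in\mathbb F_q$; by (a) each $t-a_i$ divides $G$ on the right, hence so does $g=\mathrm{lcm}_i(t-a_i)$. Conversely, by (a) the evaluation map embeds $R/RG$ into $\prod_{a\in\mathbb F_q}R/R(t-a)$, a finite direct product of simple $R$-modules each one-dimensional over $\mathbb F_q$; thus $R/RG$ is semisimple and all its simple subquotients are one-dimensional over $\mathbb F_q$. If $g$ divides $G$ on the right, $R/Rg$ is a quotient of $R/RG$, hence semisimple, so $Rg$ is a finite intersection of maximal left ideals $M\supseteq Rg$; each such $M$ contains $RG$ and satisfies $\dim_{\mathbb F_q}R/M=1$, which forces $\bar t=\bar a$ in $R/M$ for some $a\in\mathbb F_q$ and hence $M=R(t-a)$ with $g(a)=0$. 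Therefore $Rg=\bigcap\{R(t-a):a\in\mathbb F_q,\ g(a)=0\}$, so $g$ is a $W$-polynomial. Lastly, by (b) the ideal $RG=GR$ is two-sided; combining this with (c) (a right factor $g$ of $G$ gives the left factor $\theta(g)$, and left factors of $G$ are stable under $\theta$ since $\theta(G)=G$, so $g=\theta^{n}(g)$ is itself a left factor, while conversely (b) plus cancellation turns a left factor into a right factor) one gets that the right factors and left factors of $G$ coincide. The one genuinely delicate step is $\deg L=(p-1)n$ in (a) --- equivalently, that this specific generalized Vandermonde matrix attains the maximal possible rank --- and it rests entirely on the arithmetic equivalence $(p^n-1)\mid e_m\iff(p-1)n\mid m$; the remaining ingredients (skew PIDs, semisimplicity, independence of power functions, and the $W$-polynomial/finite-field background in \cite{LL},\cite{LLO},\cite{MS}) are standard.
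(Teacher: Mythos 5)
Your proof is correct, but it is substantially more self-contained than the paper's, which at the two hard points simply cites the literature. For (a) the paper invokes Theorem~2.3 of \cite{L2} for the fact that $G(t)=t^{(p-1)n+1}-t$ is the least left common multiple of the $t-a$; you instead prove it from scratch via the exponents $e_i=(p^i-1)/(p-1)$, the congruence $e_i\equiv e_j \pmod{q-1}\iff (p-1)n\mid(i-j)$, and the identification of $\deg(\mathrm{lcm})$ with the rank of the generalized Vandermonde matrix (using classical linear independence of distinct power functions on $\mathbb F_q^*$). That arithmetic checks out --- note only the harmless notational slip where your size estimate reads $q'e_n+e_s$ but should involve the remainder $r$ of $\ell$ modulo $p-1$ rather than the quotient. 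For (d) the paper proves the forward inclusion exactly as you do ($G(a_i)=0$ forces $G\in Rg$) but disposes of the converse by citing the fact from \cite{LL} that factors of Wedderburn polynomials are Wedderburn; your semisimplicity argument ($R/RG$ embeds in $\prod_a R/R(t-a)$, so $R/Rg$ is semisimple with one-dimensional simple constituents, forcing $Rg$ to be an intersection of ideals $R(t-a)$) is a clean, self-contained replacement. Parts (b) and (c) coincide with the paper's proofs (commutation relation $G(t)a=\theta(a)G(t)$, centrality of $G_0$, and right cancellation in the domain $R$), and your treatment of the ``right factors $=$ left factors'' assertion in (d) --- iterating (c) in the form $G=AB\Rightarrow G=\theta(B)A$ together with $\theta(G)=G$ --- actually covers a point the paper's proof leaves implicit. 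The trade-off is the usual one: the paper's proof is short because it leans on \cite{L2} and \cite{LL}, while yours is longer but verifiable without those references.
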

\begin{proof}
\noindent (a) The fact that the polynomial $G(t)$ is a least left common multiple
of the polynomials $t-a$, such that $a\in \mathbb F_q$ was proved in Theorem 2.3
in \cite{L2}. 

\noindent (b) Since $\theta^n=id.$, it is easy to check that
$G(t)a=\theta(a)G(t)$ and $G_0(t)a=aG_0(t)$, for any $a\in A$.  This yields the
results.

\noindent (c) Multiplying the equality $h(t)g(t)=G(t)$ by $g(t)$ on the right
we get $h(t)g(t)^2=G(t)g(t)=\theta(g(t))G(t)=\theta(g(t))h(t)g(t)$.  Since $R$ is
an integral domain we obtain $G(t)=h(t)g(t)=\theta(g(t))h(t)$.  The statement related
to $G_0(t)$ is obtained similarly.

\noindent (d) Let $g(t)$ be a Wedderburn polynomial, say
$Rg(t)=\bigcap_{i=0}^rR(t-a_i)$.  Since $G(a_i)=0$, for any $i=0,\dots r$, we
immediately get that $G(t)\in Rg(t)$.  This shows that $g(t)$ is a right factor
of $G(t)$.  By its definition, $G(t)$ is a Wedderburn
polynomial.  It is a standard fact that factors of Wedderburn polynomials are
themselves Wedderburn (cf.  \cite{LL}).
\end{proof}

This theorem also shows that even without knowing the roots of the Wedderburn polynomial $g(t)$,  we immediately get a control matrix.  This is the content of the following corollary.

\begin{corollary}
Let $g(t)\in R=\mathbb{F}_q[t;\theta]$ be a $W$-polynomial of degree $r$.
As in the previous theorem let us denote $G_0(t)=t^{(p-1)n}-1$ and 
$G(t)=t^{(p-1)n+1}-t$.
Let $g(t),h(t)\in R$ be monic polynomials such that $G(t)=h(t)g(t)$ and consider the cyclic ($G(t),\theta, 0$)-code $C$ defined by the $R$-module
$Rg(t)/RG(t)$.
\begin{enumerate}
\item[(a)] There exists $1\le l\le n$ such that $\theta^l(g(t))=g(t)$ and 
we then have $G(t)=h(t)g(t)=g(t)\theta^{l-1}(h(t))$.
\item[(b)] The control matrix of the code $C$ is given by the matrix whose rows are $T_G^i(\theta^{l-1}(\underline{h}))$ for $0\le i \le (p-1)n$.
\item[(c)] Suppose the polynomial $g(t)$ is such that $g(0)\ne 0$. Then there 
exists $h'(t)\in R$ such that $G_0(t)=h'(t)g(t)=h'(t)g(t)$.
  The control matrix of the code corresponding to the cyclic module
$Rg(t)/RG_0(t)$ is given by the matrix whose rows are $T_{G_0}^i(\underline{h'})$ for $0\le i \le (p-1)n-1$.
\end{enumerate}
\end{corollary}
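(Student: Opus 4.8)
The plan is to derive all three items from the preceding theorem together with Corollary \ref{a control matrix given by T_f}; the only genuinely new ingredient is a divisibility statement needed for (c). For (a) I would let $l$ be the least positive integer with $\theta^{l}(g)=g$: since $\theta$ has order $n$ on $\mathbb F_q$, such an $l$ exists and divides $n$, so $1\le l\le n$. Part (c) of the preceding theorem, applied to the monic factorization $G=hg$, yields $G=\theta(g)\,h$. I then apply $\theta^{\,l-1}$ to this identity: the left-hand side is unchanged, since the coefficients of $G(t)=t^{(p-1)n+1}-t$ lie in the prime field $\mathbb F_p$ and are fixed by $\theta$, while the right-hand side becomes $\theta^{l}(g)\,\theta^{\,l-1}(h)=g\,\theta^{\,l-1}(h)$ by the choice of $l$. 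This gives $G=h g=g\,\theta^{\,l-1}(h)$, which is (a) ($\theta^{\,l-1}(h)$ being monic because $h$ is).

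For (b), by (a) the polynomial $G$ has the two monic factorizations $G=g\cdot\theta^{\,l-1}(h)$ and $G=h\cdot g$, so Corollary \ref{a control matrix given by T_f} applies with $f=G$, left factor $g$, right cofactor $\theta^{\,l-1}(h)$ and left cofactor $h$: it gives that the matrix whose $i$-th row is $T_G^{\,i-1}\!\bigl(\underline{\theta^{\,l-1}(h)}\bigr)$, $1\le i\le\deg G=(p-1)n+1$, is a control matrix of $C$. Since $\theta$ acts coordinatewise on coefficient vectors, $\underline{\theta^{\,l-1}(h)}=\theta^{\,l-1}(\underline h)$, and reindexing $i\mapsto i+1$ gives exactly the rows $T_G^{\,i}\!\bigl(\theta^{\,l-1}(\underline h)\bigr)$, $0\le i\le(p-1)n$.

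For (c) the key step is to show that $g(0)\ne 0$ forces $g$ to be a right factor of $G_0$. I would argue through roots: writing $Rg=\bigcap_i R(t-a_i)$ with $a_i\in\mathbb F_q$, the inclusion $Rg\subseteq R(t-a_i)$ and Proposition \ref{p(T_a)(1)=p(a)} give $g(a_i)=0$ for each $i$, so no $a_i$ can be $0$, for otherwise $g(0)=0$. By part (a) of the preceding theorem, $t-a$ is a right factor of $G_0(t)=t^{(p-1)n}-1$ for every nonzero $a\in\mathbb F_q$, hence $G_0\in R(t-a_i)$ for all $i$ and therefore $G_0\in\bigcap_i R(t-a_i)=Rg$. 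So $G_0=h'g$ for some (necessarily monic) $h'\in R$, and the $G_0$-assertion of part (c) of the preceding theorem upgrades this to $G_0=h'g=gh'$. Corollary \ref{a control matrix given by T_f} with $f=G_0$, left factor $g$, and cofactor $h'$ on both sides then shows that the matrix with rows $T_{G_0}^{\,i-1}(\underline{h'})$, $1\le i\le\deg G_0=(p-1)n$, equivalently $T_{G_0}^{\,i}(\underline{h'})$ for $0\le i\le(p-1)n-1$, is a control matrix of the code $Rg/RG_0$.

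The step I expect to be the real obstacle is the divisibility $g\mid_r G_0$ in (c): everything else is bookkeeping on top of the earlier results, but this needs a separate argument. The root argument sketched above looks like the cleanest self-contained route, using only the defining intersection property of $W$-polynomials and the least left common multiple description of $G_0$ already established; a coprimality-style deduction from $G=tG_0=hg$ and $t\nmid_r g$ would require an extra lemma not present in the text. A secondary hazard is keeping track, in each application of Corollary \ref{a control matrix given by T_f}, of which cofactor is paired on the right of $g$: in (b) this is $\theta^{\,l-1}(h)$ rather than $h$, and that is what introduces the twist $\theta^{\,l-1}$ into the control matrix.
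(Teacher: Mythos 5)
Your proof is correct, and it supplies exactly the argument the paper intends but omits (its own ``proof'' is left to the reader): part (a) from part (c) of the preceding theorem by applying the ring automorphism $\theta^{l-1}$ of $R$ and using that $G$ has coefficients in $\mathbb F_p$, parts (b) and (c) from Corollary \ref{a control matrix given by T_f} with the right cofactor of $g$ correctly identified as $\theta^{l-1}(h)$ resp.\ $h'$, and the divisibility $G_0\in Rg$ in (c) via the roots $a_i\neq 0$ and the llcm description of $G_0$. No gaps.
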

\begin{proof}
The proofs are left to the reader.
\end{proof}

\vspace{4mm}

\centerline{\large {\bf Acknowledgments} }

\vspace{3mm}

This paper was partially prepared while the first author visited the 
University of Artois.  He would like to thank the members of this institution 
for their kind hospitality.  The first author also acknowledges the support from King Khalid University 
of Saudi Arabia (program for research and researchers number $KKU S179 33$).

\vspace{4mm}

\end{document}